\newcommand*{\medcup}{\mathbin{\scalebox{1.5}{\ensuremath{\cup}}}}
\DeclarePairedDelimiterX\set[1]\{\}{\nonscript\,#1\nonscript\,}
\newcommand{\Omegahat}{\boldsymbol{\Omega}}
\renewcommand{\vec}[1]{\boldsymbol{\mathrm{#1}}}
\newcommand{\x}{\boldsymbol{\mathrm{x}}}
\newcommand{\nhat}{\boldsymbol{\mathrm{n}}}
\title{A note on $2\times 2$ block-diagonal preconditioning
  \thanks{\funding{BS is supported by the Department of Energy, National Nuclear Security Administration,
  under Award Number(s) DE-NA0002376. SO is supported by the Department of Energy Computational Science Graduate Fellowship under grant number DE-SC0019323.}}}
\author{Ben S. Southworth\thanks{Department of Applied Mathematics,
    University of Colorado, Boulder (\url{ben.southworth@colorado.edu}),
    \url{http://orcid.org/0000-0002-0283-4928}}
    \and
    Samuel S. Olivier\thanks{Department of Applied Science \& Technology, University of California, Berkeley
    (\url{solivier@berkeley.edu})}
}
\begin{document}
\allowdisplaybreaks

\maketitle
\begin{abstract}
For $2\times 2$ block matrices, it is well-known that block-triangular or block-LDU preconditioners
with an exact Schur complement (inverse) converge in at most two iterations for fixed-point
or minimal-residual methods. Similarly, for saddle-point matrices with a zero (2,2)-block,
block-diagonal preconditioners converge in at most three iterations for minimal-residual methods,
although they may diverge for fixed-point iterations. But, what happens for non-saddle-point
matrices and block-diagonal preconditioners with an exact Schur complement? This note proves
that minimal-residual methods applied to general $2\times 2$ block matrices, preconditioned with a block-diagonal
preconditioner, including an exact Schur complement, do not (necessarily) converge in a fixed
number of iterations. Furthermore, examples are constructed where (i) block-diagonal preconditioning
with an exact Schur complement converges no faster than block-diagonal preconditioning
using diagonal blocks of the matrix, and (ii) block-diagonal preconditioning with an approximate
Schur complement converges as fast as the corresponding block-triangular preconditioning.
The paper concludes by discussing some practical applications in neutral-particle transport, introducing
one algorithm where block-triangular or block-LDU preconditioning are superior to block-diagonal,
and a second algorithm where block-diagonal preconditioning is superior both in speed and simplicity. 
\end{abstract}
\begin{keywords}
Krylov, GMRES, block preconditioning
\end{keywords}
\begin{AMS}
  65F08 \end{AMS}

\section{$2\times 2$ block preconditioners}\label{sec:2x2}

Consider a block $2\times 2$ matrix equation,
\begin{align}\label{eq:block}
\begin{bmatrix} A_{11} & A_{12} \\ A_{21} & A_{22} \end{bmatrix}
	\begin{bmatrix} \mathbf{x}_1 \\ \mathbf{x}_2 \end{bmatrix}
	\begin{bmatrix} \mathbf{b}_1 \\ \mathbf{b}_2 \end{bmatrix},
\end{align}
where $A_{11}$ and $A_{22}$ are square (although potentially different sizes), and at least
one is invertible. Here we assume without loss of generality that $A_{11}$ is nonsingular,
and focus on the case of $A_{22} \neq \mathbf{0}$ (we refer to the case of $A_{22} = \mathbf{0}$
as saddle-point in this paper, unless otherwise specified).

Such systems arise in numerous applications, and are often solved iteratively using either
fixed-point or Krylov methods with some form of block preconditioning \cite{Benzi:2005kh,Wathen:2015hi}.
The four most common block preconditioners are block diagonal, block upper triangular, block lower triangular, 
and block LDU, which we will denote $D$, $U$, $L$, and $M$, respectively. It is generally believed that one of
the diagonal blocks in these preconditioners must approximate the appropriate Schur complement.
Given we assumed $A_{11}$ to be nonsingular, here we focus on the $(2,2)$-Schur complement,
${S}_{22} := A_{22} - A_{21}A_{11}^{-1}A_{12}$. Assume the action of ${S}_{22}^{-1}$
and $A_{11}^{-1}$ are available, and define block preconditioners as follows:
\begin{align} \label{eq:prec}
L &:= \begin{bmatrix} {A}_{11} & \mathbf{0} \\ A_{21} &{S}_{22}\end{bmatrix}, \hspace{6.4ex}
	U := \begin{bmatrix} {A}_{11} &A_{12} \\ \mathbf{0} & {S}_{22}\end{bmatrix}, \nonumber\\
D_{\pm} &:= \begin{bmatrix} {A}_{11} & \mathbf{0} \\ \mathbf{0} & \pm{S}_{22}\end{bmatrix}, \hspace{3ex}
	\widehat{D}_{\pm} := \begin{bmatrix} {A}_{11} & \mathbf{0} \\ \mathbf{0} & \pm{A}_{22}\end{bmatrix}, \\
M &:=  \begin{bmatrix} I & \mathbf{0} \\ A_{21}A_{11}^{-1} & I \end{bmatrix}  \begin{bmatrix} A_{11} & \mathbf{0} \\
	\mathbf{0} & {S}_{22}\end{bmatrix}\begin{bmatrix} I & A_{11}^{-1}A_{12} \\ \mathbf{0} & I \end{bmatrix}.\nonumber
\end{align}
Note, $\widehat{D}$ is a (more practical) variation on block-diagonal preconditioning that assumes $A_{22}$ is
nonsingular and its inverse available, rather than the Schur complement ${S}_{22}$. The
$\pm$ subscript on block-diagonal preconditioners corresponds to the sign on the (2,2)-block. For
the common case of a symmetric indefinite matrix $A$, with $A_{22}$ symmetric negative definite (SND)
and $A_{11}$ symmetric positive definite (SPD), block-diagonal preconditioners with a negative
(2,2)-block (like $D_-^{-1}$ or $\widehat{D}_-^{-1}$) are SPD, and a three-term recursion such as
preconditioned MINRES can be applied \cite{Paige:1975jv}.

In the following, we restate and tighten known results regarding fixed-point and minimal residual
iterations \cite{Paige:1975jv,Saad:1986hx} in exact arithmetic, using preconditioners in \eqref{eq:prec}, for
the general case \eqref{eq:block} and the saddle-point case, where $A_{22} = \mathbf{0}$. First, note the
following proposition. 

\begin{proposition}\label{prop:s22}
Let $A_{11}$ be invertible. Then dim(ker($A)) = $ dim(ker($S_{22}$)).
\end{proposition}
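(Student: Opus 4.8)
The plan is to exhibit an explicit invertible block factorization of $A$ that isolates $S_{22}$, and then read off the kernel dimension. Since $A_{11}$ is invertible, we have the block-LDU factorization
\[
A = \begin{bmatrix} A_{11} & A_{12} \\ A_{21} & A_{22} \end{bmatrix}
= \begin{bmatrix} I & \mathbf{0} \\ A_{21}A_{11}^{-1} & I \end{bmatrix}
\begin{bmatrix} A_{11} & \mathbf{0} \\ \mathbf{0} & S_{22} \end{bmatrix}
\begin{bmatrix} I & A_{11}^{-1}A_{12} \\ \mathbf{0} & I \end{bmatrix},
\]
which is exactly the matrix $M$ appearing in \eqref{eq:prec}. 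The two triangular factors are unit-triangular, hence invertible (with explicit inverses obtained by negating the off-diagonal block). Therefore $A$ is related by left- and right-multiplication by invertible matrices to $\operatorname{diag}(A_{11}, S_{22})$, so $\operatorname{rank}(A) = \operatorname{rank}(A_{11}) + \operatorname{rank}(S_{22})$, and consequently $\dim\ker(A) = \dim\ker\big(\operatorname{diag}(A_{11},S_{22})\big) = \dim\ker(A_{11}) + \dim\ker(S_{22})$.

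Finally, since $A_{11}$ is invertible, $\dim\ker(A_{11}) = 0$, so $\dim\ker(A) = \dim\ker(S_{22})$, as claimed. (Equivalently, one can phrase this without ranks: if $A\mathbf{x} = \mathbf{0}$ with $\mathbf{x} = (\mathbf{x}_1,\mathbf{x}_2)$, then the first block row gives $\mathbf{x}_1 = -A_{11}^{-1}A_{12}\mathbf{x}_2$, and substituting into the second block row yields $S_{22}\mathbf{x}_2 = \mathbf{0}$; conversely any $\mathbf{x}_2 \in \ker(S_{22})$ extends uniquely to such an $\mathbf{x}$ via this formula, giving a linear isomorphism $\ker(A) \cong \ker(S_{22})$.)

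I do not anticipate a genuine obstacle here: the result is essentially the statement that Gaussian elimination on the first block is a similarity-like (congruence-free) equivalence preserving kernel dimension. The only point requiring a line of care is noting that the triangular factors are invertible regardless of the sizes of the blocks, so that the rank/nullity bookkeeping goes through even when $A_{11}$ and $A_{22}$ have different dimensions.
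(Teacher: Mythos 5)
Your proposal is correct. Your primary route differs from the paper's: you factor $A = M$ as the block-LDU product of two unit-triangular matrices and $\operatorname{diag}(A_{11},S_{22})$, then use invariance of rank under multiplication by invertible factors plus rank--nullity to conclude $\dim\ker(A) = \dim\ker(S_{22})$. The paper instead argues directly on kernel vectors: writing $A\mathbf{s} = \mathbf{0}$ in block form, the first block row forces $\mathbf{s}_1 = -A_{11}^{-1}A_{12}\mathbf{s}_2$, and substitution into the second block row gives $S_{22}\mathbf{s}_2 = \mathbf{0}$, yielding the explicit correspondence $\ker(A) \ni [-A_{11}^{-1}A_{12}\mathbf{s}_2;\,\mathbf{s}_2] \leftrightarrow \mathbf{s}_2 \in \ker(S_{22})$ --- which is precisely the parenthetical isomorphism you sketch at the end, so in effect you have both proofs. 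The factorization route is slightly more structural (it gives rank additivity, $\operatorname{rank}(A) = \operatorname{rank}(A_{11}) + \operatorname{rank}(S_{22})$, for free), while the paper's direct argument is more elementary and has the side benefit of producing the closed form of the kernel vectors, which matches the flavor of the eigenvector constructions used later in the paper. Your closing remark about the triangular factors being invertible irrespective of the (possibly different) block sizes is the right point of care, and it holds.
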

\begin{proof}
Note that if $A$ is singular, $\exists$ $\mathbf{s}$ such that $A\mathbf{s} = \mathbf{0}$. Under the
assumption that $A_{11}$ is nonsingular, expanding $A\mathbf{s}$ in $2\times 2$ block form
\eqref{eq:block} shows that $A$ can be singular if and only if ${S}_{22}\mathbf{s}_2 = \mathbf{0}$,
with corresponding ker$(A) = [-A_{11}^{-1}A_{12}\mathbf{s}_2; \mathbf{s}_2]$. Thus the dimension of
ker$(A)$ equals the dimension of ker$(S_{22})$, and $A$ is nonsingular if and only if $S_{22}$ is 
nonsingular.
\end{proof}

For block-diagonal preconditioner $D_{\pm}$, there is an implicit assumption that $A_{11}$ and
$S_{22}$ are invertible, in which case (by \Cref{prop:s22}) $A$ is invertible and $D_\pm^{-1}A$
is invertible. Proofs regarding the number of preconditioned minimal-residual iterations to exact
convergence with preconditioner $D_{\pm}$ in \cite{Murphy:2000hja,Ipsen:2001ui} include the
possibility of a zero eigenvalue in the characteristic polynomial. Since this cannot be the case,
the maximum number of iterations is three, not four as originally stated in \cite{Murphy:2000hja,Ipsen:2001ui}
(see \Cref{prop:diag}). 

\begin{proposition}[$2\times 2$ block preconditioners]
\text{ }
\begin{enumerate}
\itemsep0em
\item Fixed-point iterations \cite{19block} and minimal residual methods \cite{Murphy:2000hja,Ipsen:2001ui}
preconditioned with $L$, $U$, and $M$ converge in at most two iterations.

\item If $A_{11}$ and $A_{22}$ are nonsingular, convergence of fixed-point and minimal residual
methods preconditioned with $\widehat{D}_{\pm}$ is defined by convergence of an equivalent method
applied to the preconditioned Schur complements, $\pm A_{11}^{-1}{S}_{11}$ and $\pm A_{22}^{-1}{S}_{22}$
(roughly twice the number of iterations as the larger of the two) \cite{19block}.

\end{enumerate}
\end{proposition}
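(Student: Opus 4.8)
The plan is to verify both items by explicit $2\times2$ block computations: for $L$, $U$, and $M$ by exhibiting a nilpotent fixed-point error operator, and for $\widehat D_\pm$ by reducing a squared operator to a block-diagonal one whose blocks are the preconditioned Schur complements. For item~1 I would first form $L^{-1}A$ and $U^{-1}A$ from the block inverses of the triangular factors. Using $S_{22}-A_{22} = -A_{21}A_{11}^{-1}A_{12}$, a short calculation gives $L^{-1}A = \left[\begin{smallmatrix} I & A_{11}^{-1}A_{12} \\ \mathbf 0 & I \end{smallmatrix}\right]$ and $I-U^{-1}A = \left[\begin{smallmatrix} A_{11}^{-1}A_{12} \\ -I \end{smallmatrix}\right] S_{22}^{-1}A_{21}\left[\begin{smallmatrix} I & A_{11}^{-1}A_{12}\end{smallmatrix}\right]$. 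In both cases $(I-P^{-1}A)^2 = \mathbf 0$: for $L$ because $I-L^{-1}A$ has zero diagonal blocks, and for $U$ because the middle factor satisfies $S_{22}^{-1}A_{21}\left[\begin{smallmatrix} I & A_{11}^{-1}A_{12}\end{smallmatrix}\right]\left[\begin{smallmatrix} A_{11}^{-1}A_{12} \\ -I \end{smallmatrix}\right] = \mathbf 0$. Since the fixed-point error obeys $\mathbf e_k = (I-P^{-1}A)^k\mathbf e_0$ it vanishes at $k=2$, and since the minimal polynomial of $P^{-1}A$ divides $(\lambda-1)^2$, GMRES (or MINRES where applicable) attains zero residual at iteration~$2$; invertibility of $L$ and $U$ follows from that of $A_{11}$ and $S_{22}$, hence of $A$ by \Cref{prop:s22}. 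For $M$ I would simply observe that $M$ is the exact block-$LDU$ factorization of $A$, so $M=A$, $M^{-1}A=I$, and one iteration suffices---the trivial endpoint of the bound.

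For item~2, compute $\widehat D_\pm^{-1}A = \left[\begin{smallmatrix} I & A_{11}^{-1}A_{12} \\ \pm A_{22}^{-1}A_{21} & \pm I \end{smallmatrix}\right]$ and set $S_{11} := A_{11}-A_{12}A_{22}^{-1}A_{21}$. For fixed-point iteration with $\widehat D_+$, the error operator $I-\widehat D_+^{-1}A$ has zero diagonal blocks, so its square is $\diag\!\big(A_{11}^{-1}A_{12}A_{22}^{-1}A_{21},\,A_{22}^{-1}A_{21}A_{11}^{-1}A_{12}\big)=\diag\!\big(I-A_{11}^{-1}S_{11},\,I-A_{22}^{-1}S_{22}\big)$; hence the even iterates decouple into two independent $A_{11}$- and $A_{22}$-preconditioned Richardson iterations on the reduced Schur systems for $\mathbf x_1$ and $\mathbf x_2$, one $\widehat D_+$ step advancing each by ``half'' a Schur step, so convergence holds precisely when both Schur iterations converge and in roughly twice the count of the slower. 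For minimal-residual methods I would instead square the preconditioned operator itself: the off-diagonal blocks of $(\widehat D_-^{-1}A)^2$ cancel, leaving $(\widehat D_-^{-1}A)^2 = \diag\!\big(A_{11}^{-1}S_{11},\,A_{22}^{-1}S_{22}\big)$. Restricting the degree-$2k$ residual polynomial to even polynomials $p(\lambda)=q(\lambda^2)$ with $q(0)=1$ then bounds the step-$2k$ residual by the larger of two independent degree-$k$ minimal-residual problems on $A_{11}^{-1}S_{11}$ and $A_{22}^{-1}S_{22}$; when $A$ is symmetric indefinite and $\widehat D_-$ is SPD, $\widehat D_-^{-1}A$ and hence its square are $\widehat D_-$-self-adjoint, and the bound becomes tight for MINRES up to the even/odd off-by-one.

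I expect the routine part to be the block algebra, and the real obstacle to be making ``roughly twice'' precise in the minimal-residual case. The even-polynomial restriction gives only an upper bound: GMRES/MINRES may also use odd-degree polynomials, and---more fundamentally---the two diagonal blocks must share a single polynomial $q$, so the two Schur-complement minimizations cannot be run fully independently; in the non-normal general $2\times2$ setting one moreover controls residual norms only up to quasi-optimality rather than by a clean spectral identity. I would therefore state the conclusion as an equivalence up to a factor of two and an additive constant, deferring the sharp bookkeeping to \cite{19block}; the $L$, $U$, $M$ statements, by contrast, are exact and require no such caveat.
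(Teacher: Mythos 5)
Your proposal is correct in its core computations, but it necessarily takes a different route from the paper, because the paper does not prove this proposition at all: it is a restatement of known results deferred entirely to \cite{19block,Murphy:2000hja,Ipsen:2001ui}. Your block algebra is the standard argument underlying those citations and it checks out: $L^{-1}A$ is unit block upper triangular, $I-U^{-1}A$ admits the rank factorization you give with vanishing middle product, so both error operators are nilpotent of index two and the minimal polynomial of $P^{-1}A$ divides $(\lambda-1)^2$; $M$ equals $A$ exactly, so one iteration suffices; and the identities $(I-\widehat{D}_+^{-1}A)^2=\diag\bigl(I-A_{11}^{-1}S_{11},\,I-A_{22}^{-1}S_{22}\bigr)$ and $(\widehat{D}_-^{-1}A)^2=\diag\bigl(A_{11}^{-1}S_{11},\,A_{22}^{-1}S_{22}\bigr)$ are exactly the mechanism behind the ``roughly twice'' claim. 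What your version buys is self-containedness; what it leaves incomplete is coverage of all four sign/method combinations in item 2: you treat only $\widehat{D}_+$ for fixed point and $\widehat{D}_-$ for minimal residual. The $\widehat{D}_+$ minimal-residual case follows by the same device using polynomials even about $1$, i.e.\ $p(\lambda)=q((\lambda-1)^2)$ with $q(1)=1$ applied to $(\widehat{D}_+^{-1}A-I)^2$, but the $\widehat{D}_-$ fixed-point case does not decouple by simple squaring (the error operator $I-\widehat{D}_-^{-1}A$ has a $2I$ block), so relating it to $-A_{ii}^{-1}S_{ii}$ requires the eigenvalue/equivalence analysis of \cite{19block} rather than your algebra. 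Your candid acknowledgement that the even-polynomial restriction yields only an upper bound, with the sharp two-sided ``roughly twice'' bookkeeping deferred to \cite{19block}, is consistent with how the paper itself treats item 2, and the exact two-iteration claims for $L$, $U$, $M$---the only part asserted without qualification---you prove outright.
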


\begin{proposition}[$2\times 2$ block saddle-point preconditioners ($A_{22} = \mathbf{0})$]\label{prop:diag}
\text{ }
\begin{enumerate}
\itemsep0em
\item Fixed-point iterations preconditioned with $D_{\pm}$ do not necessarily converge \cite{19block}.

\item Minimal residual methods preconditioned with $D_{\pm}$ converge in at most three iterations
(see \cite{Murphy:2000hja,Ipsen:2001ui}, and \Cref{prop:s22}).
\end{enumerate}
\end{proposition}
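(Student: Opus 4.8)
The plan is to dispatch the two items separately, each reducing to a short computation with the block form of $D_\pm^{-1}A$. For item~1 I would compute the error-propagation operator of the fixed-point iteration, $I - D_\pm^{-1}A$, and exhibit an eigenvalue of modulus $\ge 1$. Substituting $A_{22} = \mathbf 0$ (so that $S_{22} = -A_{21}A_{11}^{-1}A_{12}$) gives
\[
  D_\pm^{-1}A = \begin{bmatrix} I & A_{11}^{-1}A_{12} \\ \pm S_{22}^{-1}A_{21} & \mathbf 0 \end{bmatrix},
  \qquad
  I - D_\pm^{-1}A = \begin{bmatrix} \mathbf 0 & -A_{11}^{-1}A_{12} \\ \mp S_{22}^{-1}A_{21} & I \end{bmatrix}.
\]
The cleanest proof of non-convergence is a scalar example: $A_{11} = A_{12} = A_{21} = 1$ forces $S_{22} = -1$, so $D_- = I$ and $I - D_-^{-1}A = \left[\begin{smallmatrix} 0 & -1 \\ -1 & 1 \end{smallmatrix}\right]$ has characteristic polynomial $\lambda^2 - \lambda - 1$ and hence spectral radius $(1+\sqrt 5)/2 > 1$; for the same $A$, $D_+ = \diag(1,-1)$ produces an error operator with eigenvalues $(1\pm i\sqrt 3)/2$ of modulus exactly one. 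In either case the fixed-point iteration fails to converge, which is all item~1 asserts (cf.\ \cite{19block}). I would add a remark that this is not an artifact of the example: a generalized-eigenvector argument on $D_\pm^{-1}A$ shows that $(1\pm\sqrt 5)/2$ (for $D_-$) or $(1\pm i\sqrt 3)/2$ (for $D_+$) always lies in the spectrum whenever $A_{12}\neq\mathbf 0$, so $\rho(I - D_\pm^{-1}A)\ge 1$ always.

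For item~2 I would carry out the minimal-polynomial computation of \cite{Murphy:2000hja,Ipsen:2001ui}, keeping the sign on the $(2,2)$-block explicit. Writing $C := A_{11}^{-1}A_{12}$ and using $S_{22} = -A_{21}C$, one gets $(\pm S_{22}^{-1}A_{21})\,C = \mp I$, and a one-line induction on powers of $T_\pm := D_\pm^{-1}A$ yields the annihilating identities $T_-^3 - 2T_-^2 + I = \mathbf 0$ and $T_+^3 - 2T_+^2 + 2T_+ - I = \mathbf 0$; equivalently, $D_-^{-1}A$ is annihilated by $(\lambda - 1)(\lambda^2 - \lambda - 1)$ and $D_+^{-1}A$ by $(\lambda - 1)(\lambda^2 - \lambda + 1)$. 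Hence the minimal polynomial of $D_\pm^{-1}A$ has degree at most three, and a minimal-residual method (GMRES, or MINRES in the symmetric case where $D_-$ is applicable) applied to the preconditioned system terminates in at most three iterations. The ``three versus four'' correction is then exactly the observation that these polynomials have nonzero constant term: by \Cref{prop:s22}, invertibility of $S_{22}$ forces $A$, and therefore $D_\pm^{-1}A$, to be invertible, so $0$ is not an eigenvalue and the degree-four characteristic polynomial invoked in \cite{Murphy:2000hja,Ipsen:2001ui} carries no spurious factor of $\lambda$.

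The computations are routine, so the only point that needs care — and the one I would emphasize — is that the entire argument rests on the saddle-point identity $S_{22}^{-1}A_{21}A_{11}^{-1}A_{12} = -I$, which holds only because $A_{22} = \mathbf 0$. It is precisely this identity that makes the powers of $T_\pm$ close up at degree three, and precisely its failure in the general $2\times 2$ setting that leaves room for the non-terminating behavior studied in the rest of the paper.
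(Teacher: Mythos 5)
Your proposal is correct, but it takes a more self-contained route than the paper, which essentially proves this proposition by citation: item 1 is quoted from \cite{19block}, item 2 from \cite{Murphy:2000hja,Ipsen:2001ui}, and the paper's only original contribution is the remark preceding the proposition that, since $D_{\pm}$ presupposes $S_{22}^{-1}$ exists, \Cref{prop:s22} makes $A$ (hence $D_{\pm}^{-1}A$) nonsingular, so the zero eigenvalue admitted in those references cannot occur and the bound drops from four iterations to three. You instead re-derive the cited facts directly, and your computations check out: the scalar example $A_{11}=A_{12}=A_{21}=1$ is a legitimate witness for ``do not necessarily converge'' (spectral radius $(1+\sqrt{5})/2>1$ for $D_-$, exactly $1$ for $D_+$, which still precludes convergence of the error for generic initial data), and the annihilating identities $T_-^3-2T_-^2+I=\mathbf{0}$ and $T_+^3-2T_+^2+2T_+-I=\mathbf{0}$ follow from $S_{22}^{-1}A_{21}A_{11}^{-1}A_{12}=-I$, which requires $S_{22}$ nonsingular---implicit in using $D_{\pm}$, and exactly the point where the paper invokes \Cref{prop:s22}. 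Your approach buys two things the citation-based argument leaves implicit: invertibility of $D_{\pm}^{-1}A$ comes for free from the nonzero constant term of the annihilating polynomial, and the degree-three bound on the minimal polynomial needs no diagonalizability; it is also consistent with the paper's \Cref{prop:eig}, whose formulas at $\widetilde{\lambda}=0$ reproduce your root sets $\{1,(1\pm\mathrm{i}\sqrt{3})/2\}$ and $\{1,(1\pm\sqrt{5})/2\}$. The one loose end is your closing remark that these roots ``always'' lie in the spectrum whenever $A_{12}\neq\mathbf{0}$: this is true (any $\mathbf{y}$ with $A_{12}\mathbf{y}\neq\mathbf{0}$ generates eigenvectors for both roots, as in \Cref{lem:D-} specialized to $\widetilde{\lambda}=0$), but as written it is asserted rather than proved; it is inessential, since item 1 only needs the single example.
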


Note that block-triangular and LDU-type preconditioners have a certain optimality -- in exact arithmetic
and using the exact Schur complement, convergence is guaranteed in two iterations, while convergence
using an approximate Schur complement is exactly defined by the preconditioned Schur complement-problem.
This provides general guidance in the development of block preconditioners, that the Schur
complement should be well approximated for a robust method.

The above propositions suggest one might also want a block-diagonal preconditioner to
approximate the Schur complement in one of the blocks. Indeed, block-diagonal preconditioners
are often designed to approximate a Schur complement in one of the blocks (for example, see
\cite{Siefert:2006fv}). However, it turns out that block-diagonal preconditioners with an exact Schur
complement, for $A_{22}\neq\mathbf{0}$, are \textit{not} optimal in the sense that block-triangular
and LDU preconditioners are. That is, block-diagonally preconditioned minimal-residual methods
are {not} guaranteed to converge in $\mathcal{O}(1)$ iterations (see \Cref{sec:diag} and
\Cref{prop:eig}), and convergence is not defined by the preconditioned Schur complement. 

Numerous papers have looked at eigenvalue analyses for block preconditioning (for example,
\cite{Cao:2003js,deSturler:2005ca,Bai:2009gq,Cao:2009gr,Murphy:2000hja,Ipsen:2001ui,
Bai:2005go,Siefert:2006fv,Notay:2014kv,Pestana:2014jj}). This paper falls into the series
of notes on the spectrum of various block preconditioners with exact inverses
\cite{Murphy:2000hja,Ipsen:2001ui,Cao:2003js,Cao:2009gr}, here presenting
results for the nonsymmetric and non-saddle-point, block-diagonal case. In addition to
the notes, this work
is perhaps most related to \cite{Siefert:2006fv}. There, a lengthy eigenvalue analysis is
done on approximate block-diagonal preconditioning, with approximations to $A_{11}^{-1}$
and $S_{22}^{-1}$, but the cases of exact inverses as in \eqref{eq:prec} are not directly
considered or discussed.

\Cref{sec:diag} presents the new theory, deriving the
spectrum of preconditioned operators $D_{\pm}^{-1}A$ and $\widehat{D}_{\pm}^{-1}A$
as a nonlinear function of the rank of $A_{12}$ and $A_{21}$, and the generalized
eigenvalues of matrix pencils $(A_{22},S_{22})$ or $(S_{22},A_{22})$. A brief
discussion on implications, particularly for symmetric matrices is given in \Cref{sec:diag:disc},
where examples are constructed to show that block-diagonal preconditioning with an exact
Schur compelment can converge relatively slowly, that block-diagonal preconditioning with an
approximate Schur complement can be as fast as, or significantly slower than, block triangular 
preconditioning (rather than $\approx2\times$ slower when $A_{22} = \mathbf{0}$ \cite{Fischer:1998vj}), and that block-diagonal 
preconditioning is likely more robust with symmetric indefinite matrices than SPD. 
A practical example in the simulation of neutral-particle transport is discussed in \Cref{sec:transport}.

\section{Block-diagonal preconditioning}\label{sec:diag}

It is shown in \cite{Murphy:2000hja,Ipsen:2001ui} that block-diagonal and block-triangular
preconditioners for saddle-point problems, with an exact Schur complement, have three and two
nonzero eigenvalues. Thus, the minimal polynomial for minimal residual methods
will be exact in a Krylov space of at most that size. 

\Cref{prop:eig} derives the full eigenvalue decomposition for general $2\times 2$ operators, 
preconditioned by the block-diagonal preconditioners $D_{\pm}^{-1}$ and $\widehat{D}_{\pm}^{-1}$. 
In particular, the spectrum of the preconditioned operator is fully defined by the generalized
eigenvalue problems $A_{22}\mathbf{y} = \widetilde{\lambda}S_{22}\mathbf{y}$
or $S_{22}\mathbf{y} = \widehat{\lambda}A_{22}\mathbf{y}$, depending on whether we
assume $S_{22}$ or $A_{22}$ are nonsingular, respectively. If $A_{22} = \mathbf{0}$, then
$\widetilde{\lambda} = 0$, and the results from \cite{Murphy:2000hja,Ipsen:2001ui}
immediately follow. For $A_{22}\neq\mathbf{0}$, the preconditioned operator can have more
eigenvalues, up to a full set of $n$ distinct eigenvalues (or, the generalized eigenvectors could
not form a complete basis for the space), and no such statements can be made
on the guaranteed convergence of minimal-residual methods. 

\begin{theorem}[Eigendecomposition of block-diagonal preconditioned operators]\label{prop:eig}
Assume that $A_{11}$ and $S_{22}$ are invertible. Let $\{\widetilde{\lambda},\mathbf{v}_2\}$ be
a generalized eigenpair of $A_{22}\mathbf{v}_2 = \widetilde{\lambda}S_{22}\mathbf{v}_2$. Then, the
spectra of the preconditioned operator for block-diagonal preconditioners $D_{\pm}^{-1}$ \eqref{eq:prec}
are given by:
\begin{align*}
\sigma(D_+^{-1}A) & = \left(\frac{\widetilde{\lambda} + 1}{2} \pm \frac{1}{2}\sqrt{(\widetilde{\lambda} - 1)(\widetilde{\lambda} + 3)}\right)
	\quad\medcup\quad \left\{ 1 \right\}, \\
\sigma({D}_-^{-1}A) & = \left( -\frac{ \widetilde{\lambda} - 1}{2} \pm \frac{1}{2}\sqrt{( \widetilde{\lambda} - 1)^2+4}\right)
	\quad\medcup\quad \left\{ \pm 1 \right\}.
\end{align*}
Assume that $A_{11}$ and $A_{22}$ are invertible. Now let $\{\widehat{\lambda},\mathbf{v}_2\}$ be
a generalized eigenpair of $S_{22}\mathbf{v}_2 = \widehat{\lambda}A_{22}\mathbf{v}_2$. Then, the
spectra of the preconditioned operator for block-diagonal preconditioners $\widehat{D}_{\pm}^{-1}$ \eqref{eq:prec}
are given by:
\begin{align*}
\sigma(\widehat{D}_+^{-1}A) & = \left(1 \pm \sqrt{1- \widehat{\lambda}}\right) \quad\medcup\quad \left\{ \pm 1\right\}, \\
\sigma(\widehat{D}_{-}^{-1}A) & = \left(\pm \sqrt{\widehat{\lambda}}\right) \quad\medcup\quad \left\{ \pm 1\right\}.
\end{align*}
The multiplicity of the $\{\pm1\}$ eigenvalues are given by the dimensions of the nullspace of
$A_{12}$ and $A_{21}$. Eigenvectors for each eigenvalue can also be written in a closed
form as a function of $\mathbf{v}_2$, and are derived in the proof (see \Cref{sec:diag:proof}).
\end{theorem}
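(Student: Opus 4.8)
The plan is to compute each of the four preconditioned operators explicitly in $2\times2$ block form and to reduce its eigenproblem to a scalar equation in the relevant generalized eigenvalue. Writing $X := A_{11}^{-1}A_{12}$ and using the rearranged definition of the Schur complement, $A_{21}A_{11}^{-1}A_{12} = A_{22} - S_{22}$, one gets for instance
\[
D_+^{-1}A = \begin{bmatrix} I & X \\ S_{22}^{-1}A_{21} & S_{22}^{-1}A_{22} \end{bmatrix}, \qquad S_{22}^{-1}A_{22} = I + S_{22}^{-1}A_{21}X,
\]
and entirely analogous expressions for $D_-$, $\widehat{D}_+$, $\widehat{D}_-$, obtained by replacing $S_{22}^{-1}$ with $-S_{22}^{-1}$, $A_{22}^{-1}$, or $-A_{22}^{-1}$ respectively, and using $A_{22}^{-1}A_{21}X = I - A_{22}^{-1}S_{22}$ in the $\widehat{D}$ cases. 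In every case the $(1,1)$ block is $I$, so the first block row of the eigenvalue equation $P^{-1}A[\mathbf{v}_1;\mathbf{v}_2] = \mu[\mathbf{v}_1;\mathbf{v}_2]$ reads $(\mu-1)\mathbf{v}_1 = X\mathbf{v}_2$.

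For $\mu\neq1$ I would solve $\mathbf{v}_1 = (\mu-1)^{-1}X\mathbf{v}_2$, substitute into the second block row, and use the Schur-complement identity to cancel every matrix except $A_{22}$ and $S_{22}$. For $D_\pm$ this collapses the second row to $A_{22}\mathbf{v}_2 = (\text{rational function of }\mu)\,S_{22}\mathbf{v}_2$, i.e.\ $\mathbf{v}_2$ must be a generalized eigenvector of the pencil $(A_{22},S_{22})$ with eigenvalue $\widetilde{\lambda}$, and $\mu$ solves $\mu^2 - (1+\widetilde{\lambda})\mu + 1 = 0$ for $D_+$ (respectively $\mu^2 + (\widetilde{\lambda}-1)\mu - 1 = 0$ for $D_-$); for $\widehat{D}_\pm$ one keeps $A_{22}^{-1}$ and lands on $(\mu-1)^2 = 1-\widehat{\lambda}$ (respectively $\mu^2 = \widehat{\lambda}$) for the pencil $(S_{22},A_{22})$. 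Solving these quadratics and simplifying the discriminants, e.g.\ $(1+\widetilde{\lambda})^2 - 4 = (\widetilde{\lambda}-1)(\widetilde{\lambda}+3)$, yields exactly the four displayed formulas, and back-substitution produces the eigenvector $[\,(\mu-1)^{-1}X\mathbf{v}_2;\ \mathbf{v}_2\,]$, the promised closed form.

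The eigenvalue $\mu=1$ always, and $\mu=-1$ in the $D_-$, $\widehat{D}_-$ cases, must be handled directly, since the elimination above divides by $\mu-1$ and these are precisely the values the quadratics return from the degenerate generalized eigenvalue $\widetilde{\lambda}=1$ (respectively $\widehat{\lambda}=1$). At $\mu=1$ the first block row forces $X\mathbf{v}_2 = \mathbf{0}$, hence $\mathbf{v}_2\in\ker A_{12}$ (as $A_{11}$ is nonsingular), and the second block row then pins the complementary component into $\ker A_{21}$; repeating the computation at $\mu=-1$ for the negative-$(2,2)$-block cases swaps the roles of $\ker A_{12}$ and $\ker A_{21}$. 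Collecting the eigenvectors $[\mathbf{v}_1;\mathbf{0}]$ with $\mathbf{v}_1\in\ker A_{21}$ and $[\mathbf{0};\mathbf{v}_2]$ with $\mathbf{v}_2\in\ker A_{12}$ gives the stated $\{\pm1\}$ multiplicities, and a dimension count ($2$ eigenvalues per generalized eigenvalue of the pencil, plus $\dim\ker A_{12} + \dim\ker A_{21}$) confirms the whole spectrum has been accounted for. Specializing $A_{22}=\mathbf{0}$ forces $\widetilde{\lambda}=0$ in the pencil $(A_{22},S_{22})$, and the $D_\pm$ formulas collapse to the three-point spectrum (e.g.\ $\{1\}\cup\{\tfrac12(1\pm\sqrt5)\}$ for $D_-$) of \cite{Murphy:2000hja,Ipsen:2001ui}, recovering \Cref{prop:diag}.

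The main obstacle is not the algebra but the bookkeeping at $\mu=\pm1$. When the pencil has $\widetilde{\lambda}=1$ (respectively $\widehat{\lambda}=1$) the quadratic degenerates to $(\mu-1)^2=0$ (respectively $\mu^2=1$), so the ``family'' eigenvalue collides with the special one; moreover a $\widetilde{\lambda}=1$ generalized eigenvector need not lie in $\ker A_{12}$ — it only satisfies $A_{21}A_{11}^{-1}A_{12}\mathbf{v}_2 = \mathbf{0}$ — so such a vector yields no corresponding eigenvector of $P^{-1}A$ and the operator can carry a genuine Jordan block at $\mu=1$. This is exactly the caveat in the statement that the generalized eigenvectors ``could not form a complete basis.'' I would address it by carrying the count in terms of algebraic multiplicities of the pencil throughout, and by noting that whenever $\widetilde{\lambda}=1$ (respectively $\widehat{\lambda}=1$) is not a generalized eigenvalue — or more generally whenever the corresponding generalized eigenvectors do lie in the appropriate nullspaces — the vectors exhibited above form a complete basis and the result is a true diagonalization.
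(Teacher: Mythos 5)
Your proposal is correct and follows essentially the same route as the paper's proof: write each preconditioned operator in explicit block form, eliminate the first block component via $(\mu-1)\mathbf{v}_1 = A_{11}^{-1}A_{12}\mathbf{v}_2$, reduce the second block row to the generalized pencil $(A_{22},S_{22})$ or $(S_{22},A_{22})$ using $A_{21}A_{11}^{-1}A_{12} = A_{22}-S_{22}$, solve the resulting quadratic in $\mu$, and account for the $\pm1$ eigenvalues via $\ker(A_{12})$ and $\ker(A_{21})$. The only cosmetic differences are that the paper routes through an intermediate eigenvalue $\widehat{\lambda}=(1-\lambda)^2/\lambda$ (and, for $\widehat{D}_+$, squares the off-diagonal block) before reaching the same quadratics, while your explicit discussion of the degenerate $\widetilde{\lambda}=1$ case and possible Jordan structure at $\mu=1$ is, if anything, more careful than the paper's treatment.
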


Now assume $A_{12}A_{22}^{-1}A_{21}$ is nonsingular. By continuity of eigenvalues
as a function of matrix entries, 
\begin{align*}
\lim_{A_{22}\to 0} \sigma(D_+^{-1}A) & = \left\{ 1, \frac{1 \pm \mathrm{i}\sqrt{3}}{2}\right\}, \hspace{5ex}
\lim_{A_{22}\to 0} \sigma(D_-^{-1}A) = \left\{ 1, \frac{1 \pm \sqrt{5}}{2}\right\},
\end{align*}
and the maximum number of iterations for exact convergence of minimal-residual methods
converges to three. Note that the limit of $\sigma(D_-^{-1}A)$ does not include $-1$ because
by assuming $A_{12}A_{22}^{-1}A_{21}$ is nonsingular, there is an implicit assumption that 
ket$(A_{12}) = \emptyset$, which means $\not\exists$ an eigenvalue of $-1$ (see \Cref{lem:D-}).

\subsection{Discussion}\label{sec:diag:disc}

The primary implication of \Cref{prop:eig} is that block-diagonal preconditioned minimal-residual
methods with an exact Schur
complement do not necessarily converge in a fixed number of iterations. Recall block-diagonal
preconditioning is often applied to symmetric matrices for use with MINRES, and convergence
of such Krylov methods is better understood than GMRES, Here, we include
a brief discussion on the eigenvalues of block-diagonally preconditioned symmetric $2\times 2$ block
matrices with semi-definite diagonal blocks. 

Consider real-valued, symmetric, nonsingular matrices with nonzero saddle-point structure, that is,
\begin{align}\label{eq:sym}
A = \begin{bmatrix} A_{11} & A_{12} \\ A_{12}^T & A_{22} \end{bmatrix},
\end{align}
where $A_{11}$ is SPD and $A_{22}\neq\mathbf{0}$ is symmetric semidefinite
(positive or negative definite). Such matrices arise often in practice; see review papers
\cite{Benzi:2005kh,Wathen:2015hi} for many different examples.
From \Cref{prop:eig}, we can consider the spectrum of the preconditioned operator based on the
generalized spectrum of $(S_{22},A_{22})$. \Cref{fig:eig} plots the magnitude of eigenvalues
$\lambda\in\sigma(D_{\pm}^{-1}A)$ and $\lambda\in\sigma(\widehat{D}_{\pm}^{-1}A)$ as
a function of $\widetilde{\lambda}$ in a log-log scale (excluding zero on the logarithmic x-axis). 

\begin{figure}[!ht]
    \centering
    \begin{center}
        \begin{subfigure}[t]{0.49\textwidth}
            \includegraphics[width=\textwidth]{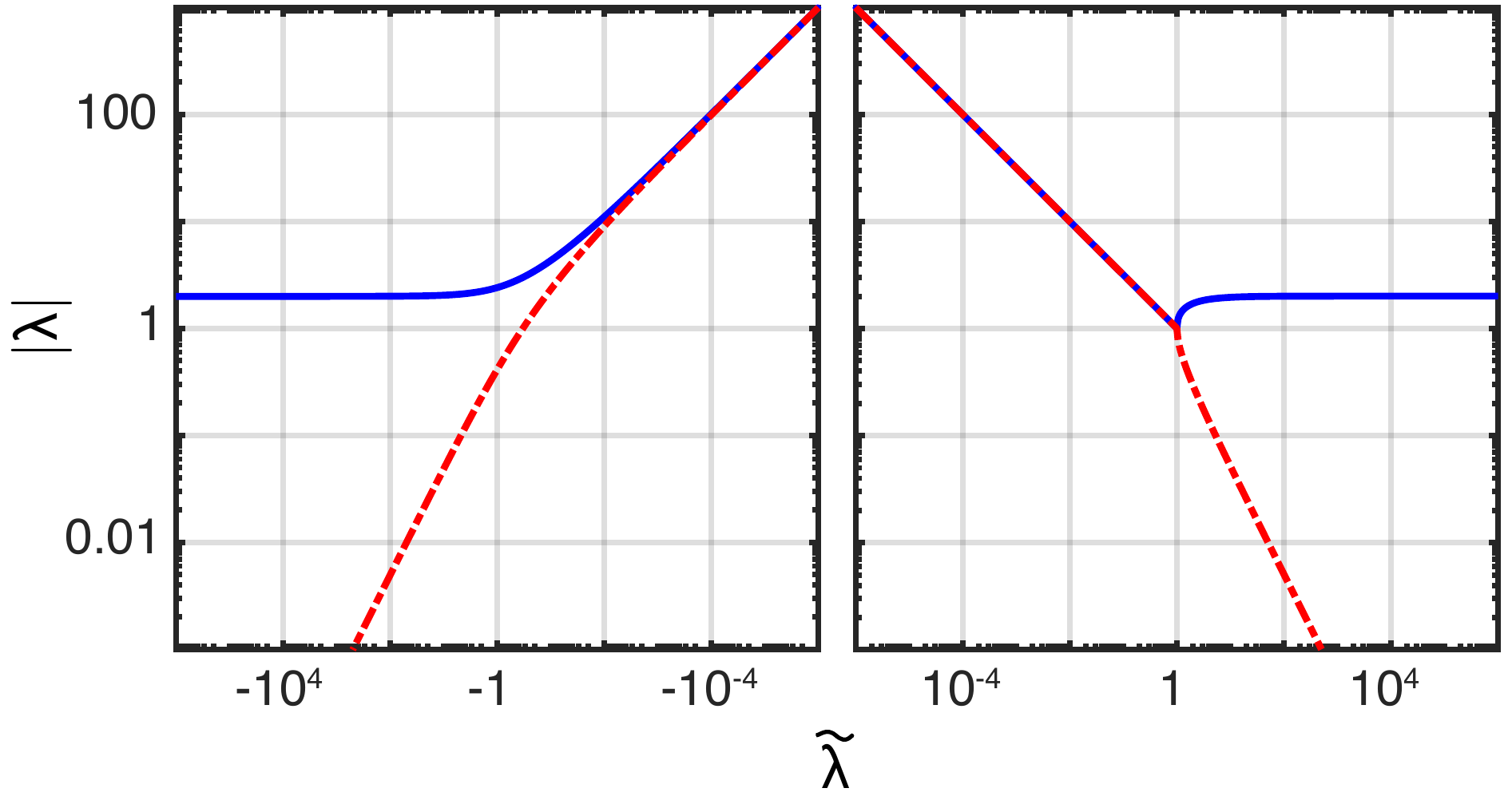}
	\caption{$\widehat{D}_+^{-1}$}
	\label{fig:Dp}
        \end{subfigure}
        \begin{subfigure}[t]{0.49\textwidth}
            \includegraphics[width=\textwidth]{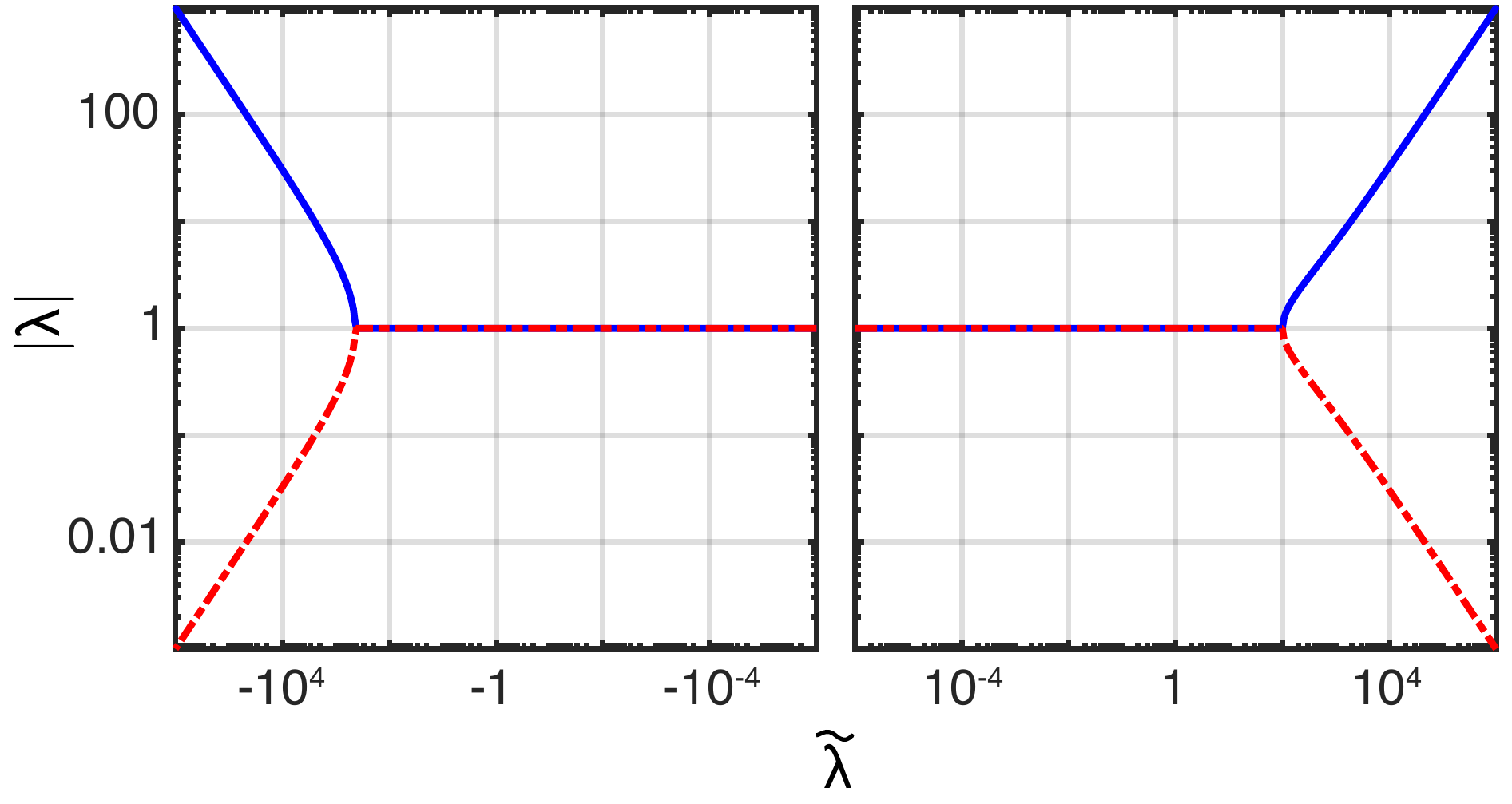}
	\caption{${D}_+^{-1}$}
        \label{fig:Sp}
        \end{subfigure}
        \\
        \begin{subfigure}[t]{0.49\textwidth}
            \includegraphics[width=\textwidth]{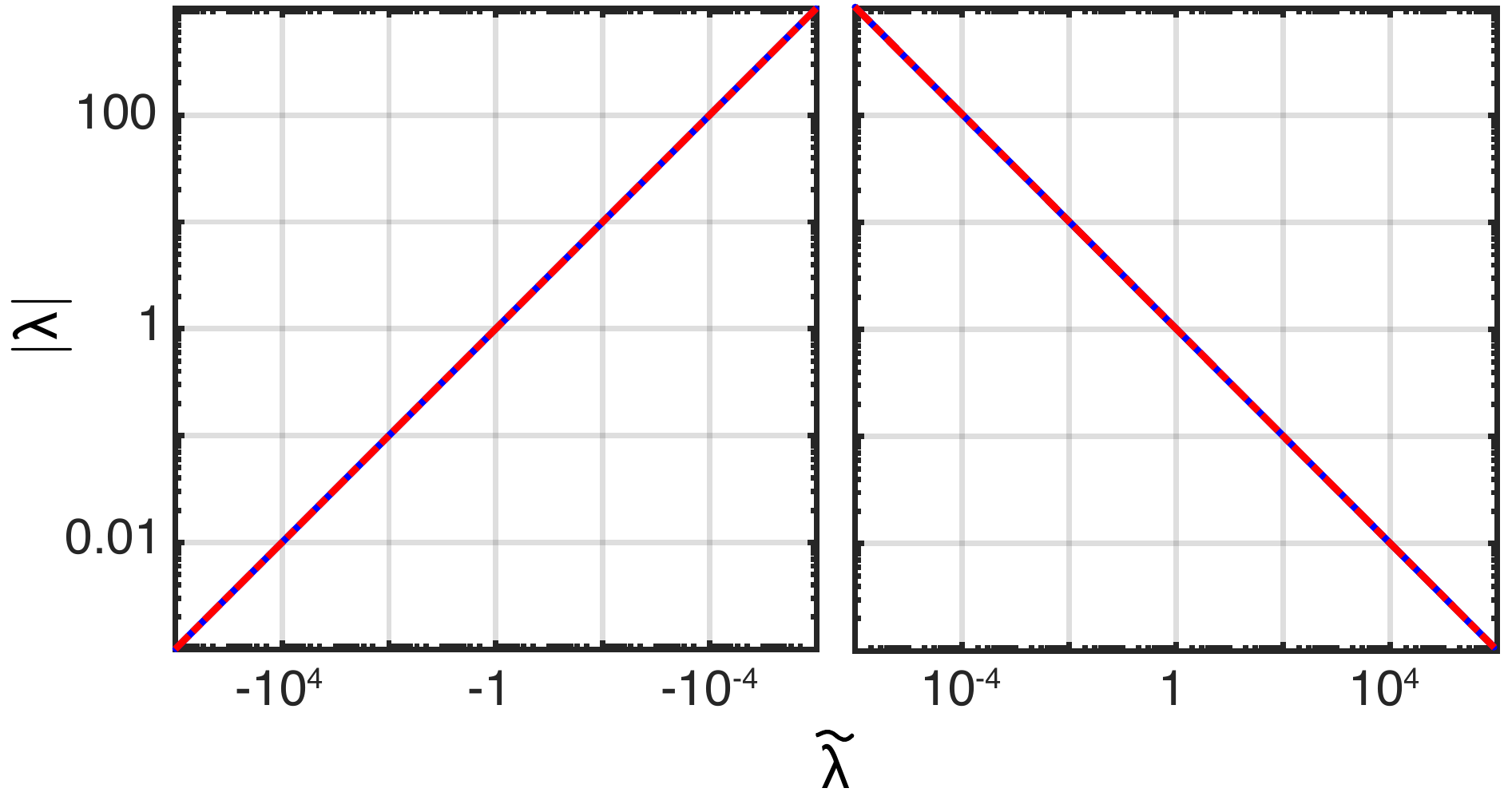}
	\caption{$\widehat{D}_-^{-1}$}
	\label{fig:Dn}
        \end{subfigure}
        \begin{subfigure}[t]{0.49\textwidth}
            \includegraphics[width=\textwidth]{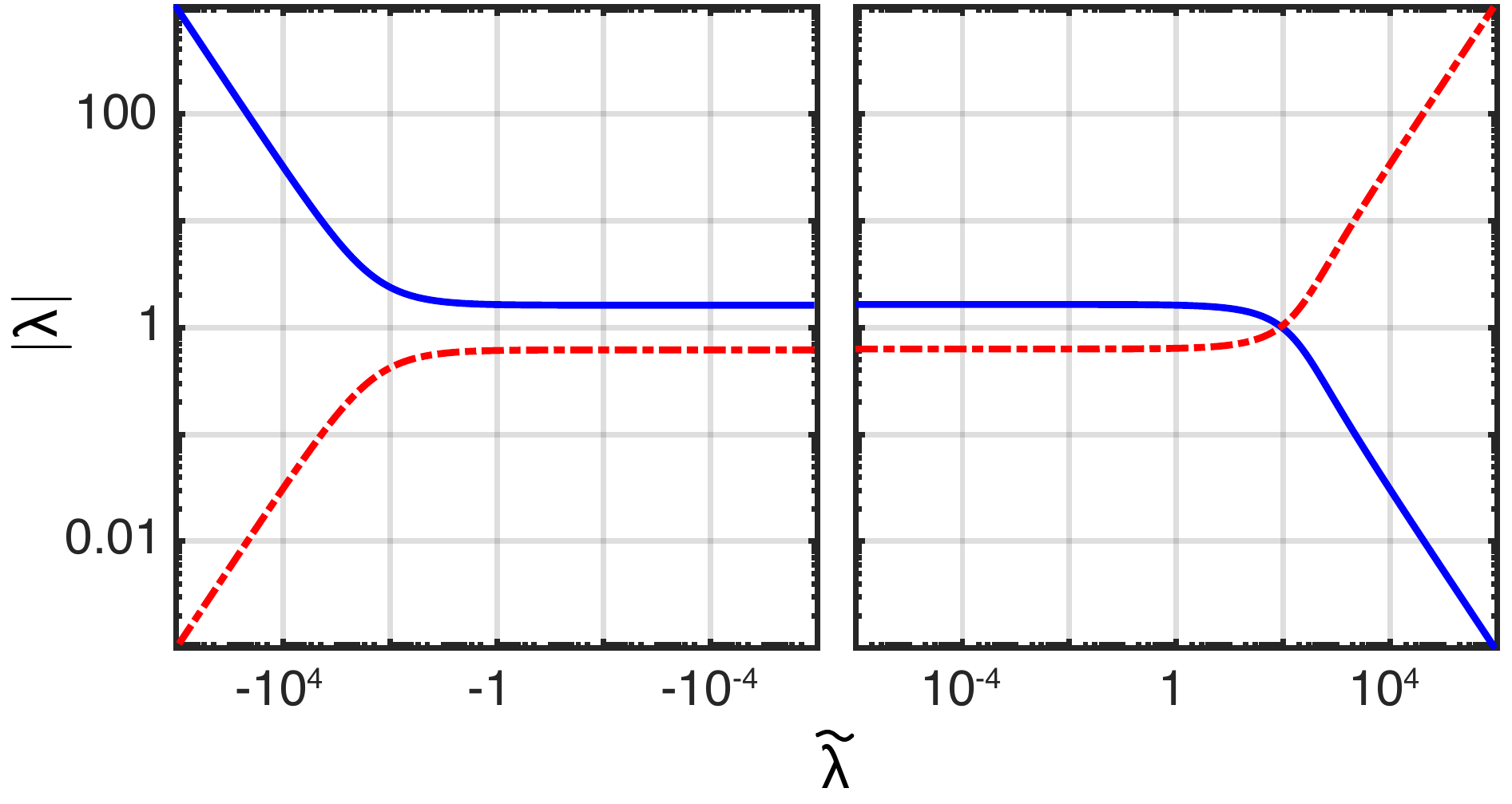}
	\caption{${D}_-^{-1}$}
        \label{fig:Sn}
        \end{subfigure}
    \end{center}
    \caption{Magnitude of eigenvalue of the preconditioned operator, $|\lambda|$, as a function of
    a real-valued generalized eigenvalue $\widetilde{\lambda}$ of $(A_{22}, S_{22})$,
    for block-diagonal preconditioners $D_{\pm}^{-1}$ and $\widehat{D}_{\pm}^{-1}$ (\Cref{prop:eig}).
    All formula for eigenvalues of $D_{\pm}^{-1}A$ and $\widehat{D}_{\pm}^{-1}A$ have a $\pm$ in
    the definition -- solid blue lines correspond to $+$ and dotted red lines correspond to $-$, for the given 
    preconditioner and eigenvalue $\widetilde{\lambda}$.
    }
    \label{fig:eig}
\end{figure}

Here we can see that when $|\widetilde{\lambda}| \lessapprox 100$, $D_{\pm}$ provides a
clearly superior preconditioning of eigenvalues than $\widehat{D}_{\pm}$. Conversely, for
$|\widetilde{\lambda}| \gtrapprox 1$, $\widehat{D}_{\pm}$ likely provides a comparable or 
potentially superior preconditioning of eigenvalues. The following example demonstrates
how SPD operators may be less amenable to block-diagonal Schur-complement 
preconditioning. 

\begin{example}[SPD operators]
Suppose $A$ is SPD. Then $A_{11},A_{22}$, and $S_{22}$ are also SPD, and 
$A_{12}^TA_{11}^{-1}A_{12}$ is symmetric positive semi-definite. Notice that 
$A_{22}^{-1}S_{22} = I - A_{22}^{-1}A_{12}^TA_{11}^{-1}A_{12}$, and the eigenvalues
of $A_{22}^{-1}A_{12}^TA_{11}^{-1}A_{12}$ are real and non-negative. It follows that
$\sigma(A_{22}^{-1}S_{22} )\subset (0,1]$ and $\widetilde{\lambda}\in \sigma(S_{22}^{-1}A_{22} )
\subset [1,\infty)$. Looking at \Cref{fig:eig}, eigenvalues of the preconditioned operator
are largely outside of the region where a Schur complement provides excellent
preconditioning of eigenvalues. 
\end{example}

Now let $A_{11}, A_{12}$, and $A_{22}$ be random $250\times 250$ matrices uniformly distributed
between $[-1,1]$. Then, for positive constants $\{c_o, c_1, c_2\}$, define
\begin{align}\label{eq:MN}
M = \begin{bmatrix} A_{11}^TA_{11}/c_1 & A_{12} / c_o \\ A_{12}^T /c_o & A_{22}^TA_{22}/c_2 \end{bmatrix}, 
\hspace{5ex}
N = \begin{bmatrix} A_{11}^TA_{11}/c_1 & A_{12} / c_o \\ A_{12}^T /c_o & \rho I -A_{22}^TA_{22}/c_2 \end{bmatrix}, 
\end{align}
where $\rho$ is chosen such that $M$ and $N$ have the same minimal magnitude eigenvalue, and
$(\rho I -A_{22}^TA_{22}/c_2) \leq 0$. Here $M$ and $N$ are symmetric $500\times 500$ matrices,
with SPD (1,1)-block, and SPD or SND (2,2)-block. \Cref{tab:ex} shows the number of block preconditioned
GMRES iterations to converge to $10^{-16}$ relative residual tolerance, using a one vector for the right-hand
side, and various $\{c_o,c_1,c_2\}$. Results are shown for block preconditioners $D_{\pm}$, $\widehat{D}_{\pm}$,
$L$, and $\widehat{L}$, a lower-triangular preconditioner using $A_{22}$ instead of $S_{22}$. 
{
\begin{table}[!htb]
\small
\centering
\begin{tabular}{|| c || c c |  c c | cc ||cc | cc | cc ||}\Xhline{1.25pt}
& \multicolumn{6}{c||}{$M$} & \multicolumn{6}{c||}{N} \\ \hline
$ c_o/c_1/c_2$ &$L$ & $\widehat{L}$ & $D_+$ & $\widehat{D}_+$ & $D_-$ & $\widehat{D}_-$ &
	$L$ & $\widehat{L}$ & $D_+$ & $\widehat{D}_+$ & $D_-$ & $\widehat{D}_-$  \\\hline
1/1/1 & 3 & 167 & 259 & 347 & ~276 & ~347 & 3 & 122 & 115 & 245 & 101 & 245 \\ 
20/1/1 & 3 & ~36 & ~74 & ~75 & ~~74  & ~~75 & 3 & 33 & ~61 & ~65 & ~60 & ~65 \\
1/10/1 & 3 & 434 & 399 & 879 & ~416 & ~880 & 3 & 343 & ~98 & 687 & ~88 & 688 \\
1/10/10 & 3 & 501 & 102 & 1001 & ~97 & 1001 & 3 & 496 & ~67 & 995 & ~62 & 993  \\
\hline\hline
\Xhline{1.25pt}
\end{tabular} 
\caption{Block-preconditioned GMRES iterations to $10^{-16}$ relative residual tolerance,
with right-hand side $\mathbf{b} = \mathbf{1}$.}
\label{tab:ex}
\end{table}
}

From \Cref{tab:ex}, note that block-diagonal preconditioning with
an exact Schur complement can make for a relatively poor preconditioner (see row 1/1/1). Moreover,
for a problem with some sense of block-diagonal dominance (in this case, scaling the off-diagonal blocks
by  1/20 compared with diagonal blocks; see row 20/1/1), the exact Schur complement provides almost
no improvement over a standard block-diagonal preconditioner, $\widehat{D}_{\pm}$.
Last, it is worth pointing out that for all cases, convergence of GMRES is faster 
applied to $N$ ($A_{22} \leq 0$) than $M$ ($A_{22}\geq 0$), in some cases up to $4\times$ faster,
which is generally consistent with \Cref{fig:eig} and the corresponding discussion.\footnote{Note that
$N$ and $M$ are still fundamentally different and there may be other factors that explain the
difference in convergence. However, for all tests with a given set of random matrices and fixed
$c_o/c_1/c_2$, the condition number, minimum magnitude eigenvalue, and maximum magnitude
eigenvalue of $M$ and $N$ are within 1\%.}

Finally, we consider how the theory extends to approximate block preconditioners.
Consider $N$ with constants $\rho=0$ and $c_o/c_1/c_2 = $ 1/10/1 \eqref{eq:MN}, and
two (artificial) approximate Schur complements,
\begin{align*}
S_{22}^* & = A_{22} - (1 - \varepsilon_*)A_{21}A_{11}^{-1}A_{12} , 
\hspace{5ex}S_{22}^{\times} = A_{22} - A_{21}A_{11}^{-1}A_{12} + \varepsilon_\times E,
\end{align*}
for error scaling $\varepsilon_*\in[0,1], \varepsilon_\times\geq 0$. Here, $E$ is a random error matrix with
entries uniformly distributed between $[-\eta,\eta]$, where $\eta$ is the average magnitude of entries in $A$.
Note that for $\varepsilon_*= \varepsilon_\times = 0$ we have $S_{22}^* = S_{22}^\times = S_{22}$,
while $S_{22}^*$ and $S_{22}^\times$ generally become decreasingly accurate approximations to
$S_{22}$ as $\varepsilon$ increases. Also, $\varepsilon_* = 1$ yields $S_{22}^* = A_{22}$. Then,
consider block preconditioners $D_*, L_*, D_\times$, and $L_\times$, which take the form of $D_+$
and $L$ in \eqref{eq:prec}, replacing $S_{22}\mapsto S_{22}^*$ or $S_{22}\mapsto S_{22}^\times$.
\Cref{fig:iters} shows the ratio of block-diagonal to block-lower-triangular preconditioned GMRES
iterations to $10^{-16}$ relative residual tolerance, for $S_{22}^*$ and $S_{22}^\times$, as a
function of the corresponding $\varepsilon$ error scaling. Results for $S_{22}^\times$ are also
shown with $A_{22} = \mathbf{0}$.

\begin{figure}[!hbt]
    \centering
    \includegraphics[width=0.6\textwidth]{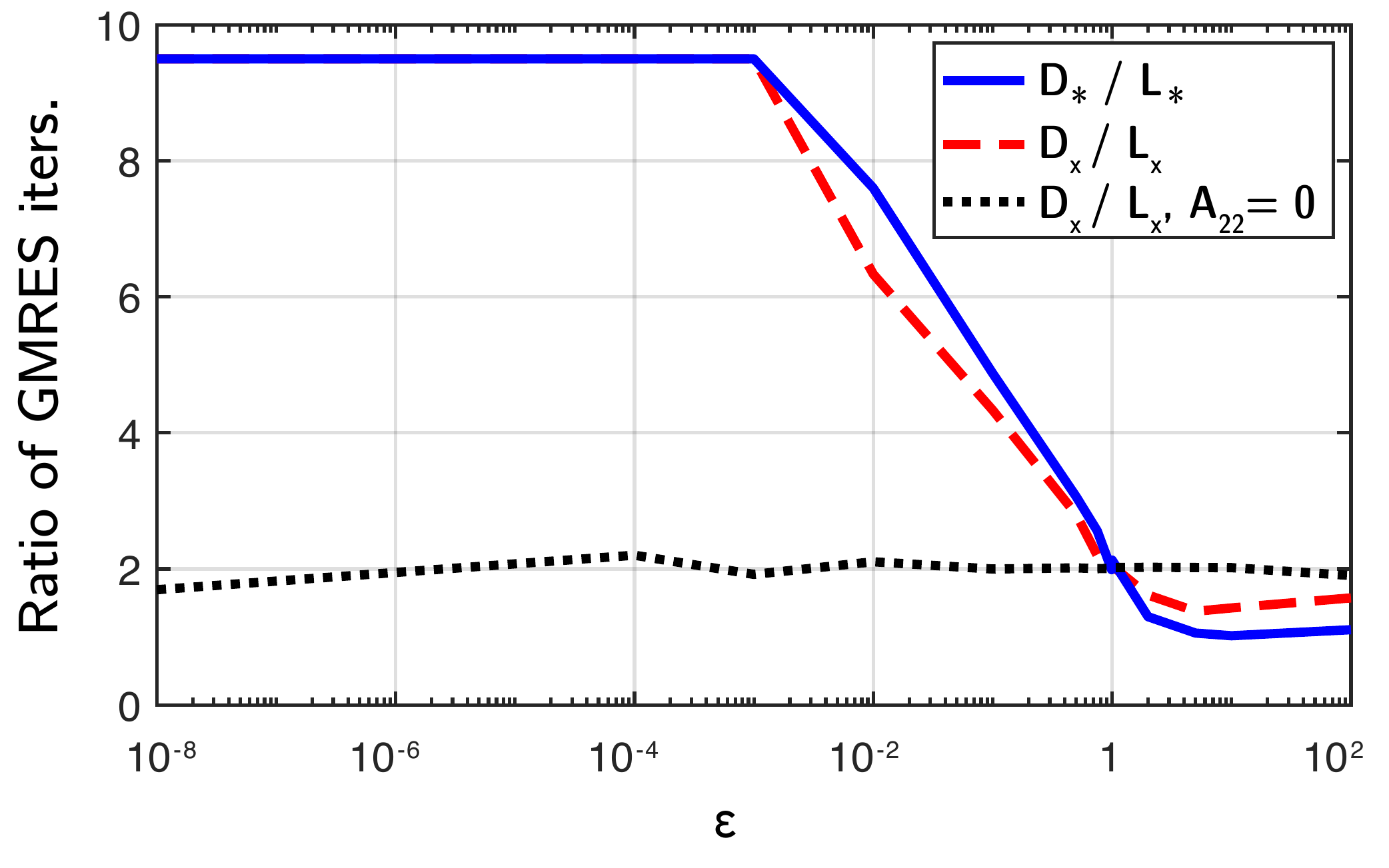}
    \caption{Ratio of block-diagonal to block lower-triangular preconditioned GMRES iterations to
    $10^{-16}$ relative residual, based on $S_{22}^*$ and $S_{22}^\times$, as a
    function of $\varepsilon_*$ or $\varepsilon_\times$. System matrix $N \in\mathbb{R}^{500\times 500}$
   is randomly generated as in \eqref{eq:MN} with $\rho = 0$.
    }
    \label{fig:iters}
\end{figure}

Note that for $\varepsilon \ll 1$ and $A_{22}\neq\mathbf{0}$, $L_*$ and $L_\times$ are significantly
more effective as preconditioners than $D_*$ and $D_\times$, in both cases requiring $\approx 10\times$
less GMRES iterations (with $L_*$ and $L_\times$ only requiring $2-4$ iterations). At $\varepsilon_* =1$,
$S_{22}^* = A_{22}$ and, consistent with theory in \cite{19block}, $D_*$ takes almost exactly twice as many
iterations as $L_*$ (interestingly, the same ratio holds for $D_\times/L_\times$, although this appears to
be coincidence). Finally, for $\varepsilon_* > 1$, $L_*$ takes almost the same number of iterations as
$D_*$. Despite both being relatively poor preconditioners, this again demonstrates that block-diagonal
preconditioning can yield convergence just as fast as block-triangular preconditioning in some cases.
Last, for $A_{22} = \mathbf{0}$, we observe the expected result that a block-diagonal preconditioner
will require twice as many iterations as block lower-triangular using the same Schur complement
approximation, when $A_{22} = \mathbf{0}$ \cite{Fischer:1998vj}.

The above discussion highlights some of the intricacies of Schur-complement preconditioning,
and some of the practical insights we can learn from this analysis. 
An interesting open question is whether the Schur complement is optimal in any sense for
block-diagonal preconditioners, or if there is a different ``optimal'' operator for the (2,2)-block. 
Of course, optimal in \textit{what} sense is perhaps the larger question, because as demonstrated
in \Cref{fig:eig} and \Cref{tab:ex} (and generally known \cite{Greenbaum:1996cp}), the eigenvalues
of the preconditioned operator can all be very nicely bounded, but GMRES can
observe arbitrarily slow convergence. 

\subsection{Proofs}\label{sec:diag:proof}

\begin{proof}[Proof of \Cref{prop:eig}]
The proof is presented as a sequence of lemmas for each preconditioner.
\begin{lemma}[Spectrum of $D_+^{-1}A$]\label{lem:D+}
\end{lemma}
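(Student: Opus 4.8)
The plan is to compute directly with the explicit form
$D_+^{-1}A = \begin{bmatrix} I & A_{11}^{-1}A_{12} \\ S_{22}^{-1}A_{21} & S_{22}^{-1}A_{22}\end{bmatrix}$
and look for eigenpairs $(\lambda,[\mathbf{u}_1;\mathbf{u}_2])$ by writing the eigenvalue relation as the block pair $\mathbf{u}_1 + A_{11}^{-1}A_{12}\mathbf{u}_2 = \lambda\mathbf{u}_1$ and $A_{21}\mathbf{u}_1 + A_{22}\mathbf{u}_2 = \lambda S_{22}\mathbf{u}_2$ (the second after multiplying through by $S_{22}$). The first equation reads $(\lambda-1)\mathbf{u}_1 = A_{11}^{-1}A_{12}\mathbf{u}_2$, which splits the analysis cleanly into the cases $\lambda\neq 1$ and $\lambda = 1$.

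For $\lambda\neq 1$, I would solve $\mathbf{u}_1 = (\lambda-1)^{-1}A_{11}^{-1}A_{12}\mathbf{u}_2$, substitute into the second equation, use the Schur-complement identity $A_{21}A_{11}^{-1}A_{12} = A_{22}-S_{22}$, and clear the factor $(\lambda-1)$ to collapse everything to $\lambda A_{22}\mathbf{u}_2 = (\lambda^2-\lambda+1)\,S_{22}\mathbf{u}_2$; note $\mathbf{u}_2 \neq \mathbf{0}$ here, else $\mathbf{u}_1 = \mathbf{0}$ as well. Interpreting $\mathbf{u}_2$ as a generalized eigenvector $A_{22}\mathbf{u}_2 = \widetilde\lambda S_{22}\mathbf{u}_2$ and cancelling the nonzero vector $S_{22}\mathbf{u}_2$ gives the scalar quadratic $\lambda^2 - (\widetilde\lambda+1)\lambda + 1 = 0$; solving it and rewriting the discriminant as $(\widetilde\lambda+1)^2 - 4 = (\widetilde\lambda-1)(\widetilde\lambda+3)$ yields the stated pair of eigenvalues, with closed-form eigenvector $[(\lambda-1)^{-1}A_{11}^{-1}A_{12}\mathbf{v}_2;\,\mathbf{v}_2]$. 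Since the product of the two roots equals $1$, neither is zero, which re-confirms $0\notin\sigma(D_+^{-1}A)$ (consistent with \Cref{prop:s22}). Conversely, this branch is exhaustive: any eigenvalue $\lambda\neq 1$ forces $\mathbf{u}_2$ to be a generalized eigenvector with $\widetilde\lambda = (\lambda^2-\lambda+1)/\lambda$, using that $\lambda\neq 0$.

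For $\lambda = 1$, the first block equation forces $A_{11}^{-1}A_{12}\mathbf{u}_2 = \mathbf{0}$, hence $A_{12}\mathbf{u}_2 = \mathbf{0}$ since $A_{11}$ is invertible; substituting back and using $S_{22}\mathbf{u}_2 = A_{22}\mathbf{u}_2 - A_{21}A_{11}^{-1}A_{12}\mathbf{u}_2 = A_{22}\mathbf{u}_2$ reduces the second equation to $A_{21}\mathbf{u}_1 = \mathbf{0}$. Thus the eigenspace for $\lambda = 1$ is exactly $\ker A_{21}\oplus\ker A_{12}$ as a subspace of the product space, of dimension $\dim\ker A_{21} + \dim\ker A_{12}$, which is the claimed multiplicity of the eigenvalue $1$.

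The step needing the most care is the bookkeeping at the overlap $\widetilde\lambda = 1$: there the quadratic degenerates to the double root $\lambda = 1$ and the eigenvector formula breaks down through the factor $(\lambda-1)^{-1}$, so contributions to $\lambda = 1$ must be accounted for through the nullspace description rather than the generalized-eigenvalue branch, and a generalized eigenvector with $\widetilde\lambda = 1$ need not yield an honest eigenvector of $D_+^{-1}A$ (it may sit in a Jordan chain). I would therefore state explicitly that the lemma asserts the eigenvalue \emph{set} together with the multiplicity of $\lambda = 1$, and exhibits one eigenvector per eigenvalue on the generic branch, but does not claim a complete eigenbasis — matching the caveat already flagged for \Cref{prop:eig}.
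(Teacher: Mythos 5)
Your proposal is correct and follows essentially the same route as the paper's proof of \Cref{lem:D+}: eliminate the first block component via $(\lambda-1)\mathbf{u}_1 = A_{11}^{-1}A_{12}\mathbf{u}_2$, reduce to the generalized pencil $(A_{22},S_{22})$ to obtain the quadratic $\lambda^2-(\widetilde\lambda+1)\lambda+1=0$, and account for $\lambda=1$ through $\ker A_{12}$ and $\ker A_{21}$. Your version merely streamlines the algebra (bypassing the paper's intermediate eigenproblem for $S_{22}^{-1}A_{21}A_{11}^{-1}A_{12}$ and the auxiliary variable $\widehat{\lambda}$) and is, if anything, more careful about exhaustiveness and the degenerate case $\widetilde\lambda=1$.
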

\begin{proof}
By nature of the preconditioner, we assume that $A_{11}$ and $S_{22}$ are nonsingular and, thus, so is $A$. 
Noting the identity $A_{22} = S_{22} + A_{21}A_{11}^{-1}A_{12}$, one can expand $D_+^{-1}A$ to the block form
\begin{align*}
D_+^{-1}A & := \begin{bmatrix} A_{11}^{-1} & \mathbf{0} \\ \mathbf{0} & {S}_{22}^{-1} \end{bmatrix}
	\begin{bmatrix} A_{11}  & A_{12} \\ A_{21} & A_{22}\end{bmatrix}
= \begin{bmatrix} I \\ {S}_{22}^{-1}A_{21} \end{bmatrix} \begin{bmatrix} I & A_{11}^{-1}A_{12}\end{bmatrix} + 
	\begin{bmatrix} \mathbf{0} & \mathbf{0} \\ \mathbf{0} & I \end{bmatrix}.
\end{align*}
Now consider the corresponding eigenvalue problem,
\begin{align}\label{eq:eig0}
\left(\begin{bmatrix} I \\ {S}_{22}^{-1}A_{21} \end{bmatrix} \begin{bmatrix} I & A_{11}^{-1}A_{12}\end{bmatrix} + 
	\begin{bmatrix} \mathbf{0} & \mathbf{0} \\ \mathbf{0} & I \end{bmatrix}\right)
	\begin{bmatrix} \mathbf{x} \\ \mathbf{y} \end{bmatrix} & = 
	\lambda \begin{bmatrix} \mathbf{x} \\ \mathbf{y} \end{bmatrix},
\end{align}
which can be expanded as the set of equations
\begin{align*}
\mathbf{x} + A_{11}^{-1}A_{12}\mathbf{y} & = \lambda \mathbf{x}, \\
S_{22}^{-1}A_{21}(\mathbf{x} + A_{11}^{-1}A_{12}\mathbf{y}) & = (\lambda - 1)\mathbf{y}.
\end{align*}
If $A_{12}$ or $A_{21}$ are not full rank, then for each $\mathbf{y}_k\in$ ker$(A_{12})$ or
each $\mathbf{x}_k\in$ ker$(A_{21})$, there exist eigenpairs $\left\{ [ \mathbf{0}; \mathbf{y}_k] , 1\right\}$
or $\left\{ [\mathbf{x}_k; \mathbf{0}], 1\right\}$, respectively (the $;$ denotes a column vector).
This accounts for dim(ker($A_{21})) + $ dim(ker($A_{12}))$ eigenvalues. 
For $\mathbf{x}\not\in$ ker$(A_{21})$ and $\mathbf{y}\not\in$ ker$(A_{12})$,
solving for $\mathbf{x}$ yields $\mathbf{x} := \frac{-1}{1-\lambda}A_{11}^{-1}A_{12}\mathbf{y}$.
Plugging in reduces \eqref{eq:eig0} to an eigenvalue problem for $\mathbf{y}$,
\begin{align}\label{eq:eig1}
S_{22}^{-1}A_{21}A_{11}^{-1}A_{12} \mathbf{y} & = \frac{(1-\lambda)^2}{\lambda} \mathbf{y}.
\end{align}
Let $\widehat{\lambda} := (1- \lambda)^2/\lambda$ and assume $\lambda \neq 1$. Expanding, we have
$\lambda^2 - \lambda(2+\widehat{\lambda})+1 = 0$, and solving for $\lambda$ yields 
$\lambda = 1 + \frac{\widehat{\lambda}}{2} \pm \frac{1}{2}\sqrt{\widehat{\lambda}(\widehat{\lambda}+4)}$.

Now consider the eigenvalue problem in \eqref{eq:eig1} for $\widehat{\lambda}$. Applying $S_{22}$
to both sides and rearranging yields the generalized eigenvalue problem
$A_{22}\mathbf{y} = \frac{1+\widehat{\lambda}}{\widehat{\lambda}}A_{21}A_{11}^{-1}A_{12}\mathbf{y}$.
Subtracting $\frac{1+\widehat{\lambda}}{\widehat{\lambda}}A_{22}$ from both sides then yields the equivalent
generalized eigenvalue problem
\begin{align}
A_{22} \mathbf{y} & = (1+\widehat{\lambda}) S_{22}\mathbf{y}.\label{eq:eig3}
\end{align}
Thus, let $\widetilde{\lambda}$ be a generalized eigenvalue of $(A_{22}, S_{22})$ in \eqref{eq:eig3}.
Solving for $\widehat{\lambda}$ yields $\widehat{\lambda} := (\widetilde{\lambda} - 1)$ as a function of
generalized eigenvalues of $(A_{22}, S_{22})$ (note, for $\mathbf{y} \in$ ker$(A_{22})$, we
have $\widehat{\lambda} = -1$). Furthermore, the generalized eigenvectors in \eqref{eq:eig3} are
the same as eigenvectors in the (2,2)-block for the preconditioned operator \eqref{eq:eig0}.
Plugging $\widehat{\lambda}$ into the equation above for $\lambda$, we can express the
eigenvalues and eigenvectors of $D_+^{-1}A$ as a nonlinear function of the generalized
eigenvalues of $(A_{22}, S_{22})$,
\begin{align}\label{eq:DpA}
\sigma(D_+^{-1}A) = \left(\frac{\widetilde{\lambda} + 1}{2} \pm \frac{1}{2}\sqrt{(\widetilde{\lambda} - 1)(\widetilde{\lambda} + 3)}\right)
	\quad\medcup\quad \left\{ 1 \right\},
\end{align}
Eigenvectors of the former set in \eqref{eq:DpA} are given by $[\frac{-1}{1-\lambda}A_{11}^{-1}A_{12}\mathbf{y}; \mathbf{y}]$
for each generalized eigenpair $\{\mathbf{y}, \widehat{\lambda}\}$ of \eqref{eq:eig3} and eigenvalue 
$\lambda(\widehat{\lambda})$. Note that each generalized eigenpair of $(A_{22},S_{22})$ corresponds
to two eigenpairs of $D_+^{-1}A$, because $\lambda(\widehat{\lambda}) = 0$ is a quadratic equation in
$\lambda$. The latter eigenvalue $\lambda = 1$ in \eqref{eq:DpA} has multiplicity given by the sum of
dimensions of ker$(A_{12})$ and ker$(A_{21}$, with eigenvectors as defined previously in the proof. 
\end{proof}

\begin{lemma}[Spectrum of $D_{-}^{-1}A$]\label{lem:D-}
\end{lemma}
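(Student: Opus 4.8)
The plan is to follow the template of \Cref{lem:D+}. Since $D_-^{-1} = \diag(A_{11}^{-1},-S_{22}^{-1})$, I would first expand the preconditioned operator, using the identity $A_{22} = S_{22} + A_{21}A_{11}^{-1}A_{12}$, into the rank-structured form
\begin{align*}
D_-^{-1}A = \begin{bmatrix} I \\ -S_{22}^{-1}A_{21} \end{bmatrix}\begin{bmatrix} I & A_{11}^{-1}A_{12}\end{bmatrix} + \begin{bmatrix} \mathbf{0} & \mathbf{0} \\ \mathbf{0} & -I \end{bmatrix},
\end{align*}
which is the same structure as in the $D_+$ case, but with the $(2,2)$ identity block and the off-diagonal coupling negated.

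Next I would write the eigenvalue problem $D_-^{-1}A\,[\mathbf{x};\mathbf{y}] = \lambda\,[\mathbf{x};\mathbf{y}]$ as the pair of equations
\begin{align*}
\mathbf{x} + A_{11}^{-1}A_{12}\mathbf{y} &= \lambda\mathbf{x}, \\
-S_{22}^{-1}A_{21}(\mathbf{x} + A_{11}^{-1}A_{12}\mathbf{y}) - \mathbf{y} &= \lambda\mathbf{y},
\end{align*}
and first peel off the eigenpairs forced by rank deficiency: for each $\mathbf{y}_k \in \ker(A_{12})$ the pair $\{[\mathbf{0};\mathbf{y}_k],-1\}$ solves the system (the first equation is vacuous, the second reduces to $-\mathbf{y}_k = \lambda\mathbf{y}_k$), and for each $\mathbf{x}_k \in \ker(A_{21})$ the pair $\{[\mathbf{x}_k;\mathbf{0}],1\}$ solves it. This accounts for $\dim\ker(A_{12}) + \dim\ker(A_{21})$ eigenvalues and shows that $-1$ lies in the spectrum exactly when $A_{12}$ is rank deficient --- the fact invoked in the remark following the theorem.

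For the remaining eigenpairs, $\mathbf{y}\notin\ker(A_{12})$ and $\mathbf{x}\notin\ker(A_{21})$ force $\lambda\neq1$, so I can solve the first equation for $\mathbf{x} = (\lambda-1)^{-1}A_{11}^{-1}A_{12}\mathbf{y}$, substitute into the second, and reduce to $S_{22}^{-1}A_{21}A_{11}^{-1}A_{12}\mathbf{y} = \tfrac{1-\lambda^2}{\lambda}\mathbf{y}$. Using $A_{21}A_{11}^{-1}A_{12} = A_{22}-S_{22}$, this is equivalent to the generalized eigenvalue problem $A_{22}\mathbf{y} = \widetilde{\lambda}S_{22}\mathbf{y}$ with $\widetilde{\lambda} = 1 + \tfrac{1-\lambda^2}{\lambda}$; clearing denominators gives $\lambda^2 + (\widetilde{\lambda}-1)\lambda - 1 = 0$, whose roots are $\lambda = -\tfrac{\widetilde{\lambda}-1}{2}\pm\tfrac12\sqrt{(\widetilde{\lambda}-1)^2+4}$, with eigenvectors $[(\lambda-1)^{-1}A_{11}^{-1}A_{12}\mathbf{y};\mathbf{y}]$ for each generalized eigenpair $\{\mathbf{y},\widetilde{\lambda}\}$ of $(A_{22},S_{22})$. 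As in \Cref{lem:D+}, each such generalized eigenpair produces two eigenpairs of $D_-^{-1}A$, since the reduction is quadratic in $\lambda$; together with the $\pm1$ eigenvalues above this is precisely the claimed spectrum.

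I do not anticipate a genuine obstacle; the computation is routine and closely parallels the $D_+$ case. The only point needing care is the multiplicity bookkeeping, in particular the degenerate generalized eigenvalue $\widetilde{\lambda}=1$, where the quadratic factors as $\lambda^2-1$, so that its two branch roots coincide with $\pm1$ and must be reconciled with the $\pm1$ contributions from $\ker(A_{12})$ and $\ker(A_{21})$. I would also record that $\lambda=0$ cannot occur --- by \Cref{prop:s22}, $A$ and $D_-$ are invertible, hence so is $D_-^{-1}A$, consistent with the quadratic having constant term $-1$ --- and note the $A_{22}=\mathbf{0}$ specialization, $\widetilde{\lambda}=0$, which yields $\lambda\in\{1,(1\pm\sqrt5)/2\}$, as a sanity check against \cite{Murphy:2000hja,Ipsen:2001ui} and the limit stated just after the theorem.
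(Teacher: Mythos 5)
Your proposal is correct and follows essentially the same route as the paper's proof of \Cref{lem:D-}: the same kernel eigenpairs $\{[\mathbf{0};\mathbf{y}_k],-1\}$ and $\{[\mathbf{x}_k;\mathbf{0}],1\}$, the same elimination of $\mathbf{x}$, the same reduction to $S_{22}^{-1}A_{21}A_{11}^{-1}A_{12}\mathbf{y} = \tfrac{(1-\lambda)(1+\lambda)}{\lambda}\mathbf{y}$, and the same quadratic $\lambda^2 + (\widetilde{\lambda}-1)\lambda - 1 = 0$ giving $\lambda = -\tfrac{\widetilde{\lambda}-1}{2}\pm\tfrac12\sqrt{(\widetilde{\lambda}-1)^2+4}$ together with the multiplicities from $\ker(A_{12})$ and $\ker(A_{21})$. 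Your added remarks (nonsingularity ruling out $\lambda=0$, the $\widetilde{\lambda}=1$ degeneracy, and the $A_{22}=\mathbf{0}$ limit) are consistent with the paper and go slightly beyond its bookkeeping, but do not change the argument.
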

\begin{proof}
The proof proceed analogous to \Cref{lem:D+}. The block-preconditioned eigenvalue problem
can be expressed as the set of equations
\begin{align*}
\mathbf{x} + A_{11}^{-1}A_{12}\mathbf{y} & = \lambda \mathbf{x}, \\
-S_{22}^{-1}A_{21}(\mathbf{x} + A_{11}^{-1}A_{12}\mathbf{y}) & = (\lambda + 1)\mathbf{y}.
\end{align*}
If $A_{12}$ or $A_{21}$ are not full rank, then for each $\mathbf{y}_k\in$ ker$(A_{12})$ or
each $\mathbf{x}_k\in$ ker$(A_{21})$, there exist eigenpairs $\left\{ [ \mathbf{0}; \mathbf{y}_k] , -1\right\}$
or $\left\{ [\mathbf{x}_k; \mathbf{0}], 1\right\}$, respectively.
This accounts for dim(ker($A_{21})) + $ dim(ker($A_{12}))$ eigenvalues. 
For $\mathbf{x}\not\in$ ker$(A_{21})$ and $\mathbf{y}\not\in$ ker$(A_{12})$,
solving for $\mathbf{x}$ yields $\mathbf{x} := \frac{-1}{1-\lambda}A_{11}^{-1}A_{12}\mathbf{y}$.
Plugging in reduces to an eigenvalue problem for $\mathbf{y}$,
\begin{align}\label{eq:eign1}
S_{22}^{-1}A_{21}A_{11}^{-1}A_{12} \mathbf{y} & = \frac{(1-\lambda)(1+\lambda)}{\lambda} \mathbf{y}.
\end{align}
Let $\widehat{\lambda} := (1- \lambda)(1+\lambda)/\lambda$ and assume $\lambda \neq \pm1$. Expanding, we have
$\lambda^2 + \lambda\widehat{\lambda}- 1 = 0$, and solving for $\lambda$ yields 
$\lambda = -\frac{\widehat{\lambda}}{2} \pm \frac{1}{2}\sqrt{\widehat{\lambda}^2+4}$.
Plugging $\widehat{\lambda} := \widetilde{\lambda} - 1$ as in \Cref{lem:D+} yields the spectrum
of $D_-^{-1}A$ as a nonlinear function of the generalized eigenvalues of $(A_{22}, S_{22})$,
\begin{align} \label{eq:DmA}
\sigma({D}_-^{-1}A) = \left( -\frac{ \widetilde{\lambda} - 1}{2} \pm \frac{1}{2}\sqrt{( \widetilde{\lambda} - 1)^2+4}\right)
	\quad\medcup\quad \left\{ \pm 1 \right\}.
\end{align}
Eigenvectors of the former set in \eqref{eq:DmA} are given by $[\frac{-1}{1-\lambda}A_{11}^{-1}A_{12}\mathbf{y}; \mathbf{y}]$
for each generalized eigenpair $\{\mathbf{y}, \widehat{\lambda}\}$ of \eqref{eq:eig3} and eigenvalue 
$\lambda(\widehat{\lambda})$. Note that each generalized eigenpair of $(A_{22},S_{22})$ corresponds
to two eigenpairs of $D_+^{-1}A$, because $\lambda(\widehat{\lambda}) = 0$ is a quadratic equation in
$\lambda$. The latter eigenvalues $\lambda = -1$ and $\lambda = 1$ in \eqref{eq:DmA} have multiplicity
given by dimensions of ker$(A_{12})$ and ker$(A_{21})$, respectively, with eigenvectors as defined
previously in the proof. 
\end{proof}

\begin{lemma}[Spectrum of $\widehat{D}_+^{-1}A$]\label{lem:hD+}
\end{lemma}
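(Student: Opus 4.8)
The plan is to follow the template of \Cref{lem:D+} almost verbatim; the only structural difference is that $\widehat{D}_+$ places $A_{22}$ rather than $S_{22}$ in the $(2,2)$-block, which actually makes the preconditioned operator a bit cleaner. First I would form
\begin{align*}
\widehat{D}_+^{-1}A = \begin{bmatrix} A_{11}^{-1} & \mathbf{0} \\ \mathbf{0} & A_{22}^{-1} \end{bmatrix}
	\begin{bmatrix} A_{11} & A_{12} \\ A_{21} & A_{22} \end{bmatrix}
	= \begin{bmatrix} I & A_{11}^{-1}A_{12} \\ A_{22}^{-1}A_{21} & I \end{bmatrix},
\end{align*}
and write the eigenvalue problem $\widehat{D}_+^{-1}A\,[\mathbf{x};\mathbf{y}] = \lambda\,[\mathbf{x};\mathbf{y}]$ as the pair of equations $(\lambda - 1)\mathbf{x} = A_{11}^{-1}A_{12}\mathbf{y}$ and $(\lambda - 1)\mathbf{y} = A_{22}^{-1}A_{21}\mathbf{x}$.

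Next I would isolate the eigenvalues coming from rank-deficiency of the off-diagonal blocks, exactly as in \Cref{lem:D+}: for each $\mathbf{y}_k \in \ker(A_{12})$ the vector $[\mathbf{0};\mathbf{y}_k]$ is an eigenvector with $\lambda = 1$, and for each $\mathbf{x}_k \in \ker(A_{21})$ the vector $[\mathbf{x}_k;\mathbf{0}]$ is an eigenvector with $\lambda = 1$, which accounts for $\dim\ker(A_{12}) + \dim\ker(A_{21})$ eigenvalues and is the ``$\{\pm1\}$'' set of the theorem (collapsing to the single value $1$ in this case, just as the corresponding set does in \Cref{lem:D+}). In the complementary regime $\lambda \neq 1$ one checks that automatically $\mathbf{y} \notin \ker(A_{12})$ and $\mathbf{x} \notin \ker(A_{21})$ (otherwise the two equations force $\mathbf{x}=\mathbf{y}=\mathbf{0}$), so I would solve the first equation for $\mathbf{x} = \tfrac{1}{\lambda-1}A_{11}^{-1}A_{12}\mathbf{y}$, substitute into the second, and clear denominators to get the reduced eigenproblem $A_{22}^{-1}A_{21}A_{11}^{-1}A_{12}\mathbf{y} = (\lambda - 1)^2\mathbf{y}$.

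The key step is then to substitute the identity $A_{21}A_{11}^{-1}A_{12} = A_{22} - S_{22}$, which converts the reduced problem into $(I - A_{22}^{-1}S_{22})\mathbf{y} = (\lambda-1)^2\mathbf{y}$, equivalently the generalized eigenvalue problem $S_{22}\mathbf{y} = \bigl(1 - (\lambda-1)^2\bigr)A_{22}\mathbf{y}$. Hence, writing $\{\widehat{\lambda},\mathbf{v}_2\}$ for a generalized eigenpair of $(S_{22},A_{22})$, we obtain $\widehat{\lambda} = 1 - (\lambda-1)^2$, and solving this quadratic in $\lambda$ gives $\lambda = 1 \pm \sqrt{1 - \widehat{\lambda}}$ with eigenvector $[\tfrac{-1}{1-\lambda}A_{11}^{-1}A_{12}\mathbf{v}_2;\mathbf{v}_2]$; each generalized eigenpair yields two such eigenvalues because the relation is quadratic. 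A dimension count identical to the one in \Cref{lem:D+} then confirms that these two families exhaust $\sigma(\widehat{D}_+^{-1}A)$.

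I do not anticipate a real obstacle: once the identity $A_{21}A_{11}^{-1}A_{12} = A_{22}-S_{22}$ is in hand the computation is routine and strictly simpler than \Cref{lem:D+}. The only points that need a moment of care are the bookkeeping between the $\lambda = 1$ case and the generic case (a generalized eigenpair with $\widehat{\lambda} = 1$ formally regenerates $\lambda = 1$), and the fact that for $\widehat{D}_+$ the Schur complement $S_{22}$ is \emph{not} assumed invertible, so $\widehat{\lambda} = 0$ is admissible and correctly corresponds to $\lambda \in \{0,2\}$ precisely when $A$ is singular — all handled exactly as in \Cref{lem:D+}.
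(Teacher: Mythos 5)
Your proof is correct and reaches the paper's stated spectrum, but by a slightly different route than the paper uses for this particular lemma. The paper writes $\widehat{D}_+^{-1}A = I + C$ with $C$ the off-diagonal block matrix, then \emph{squares} $C$ to obtain a block-diagonal operator with blocks $A_{11}^{-1}A_{12}A_{22}^{-1}A_{21}$ and $A_{22}^{-1}A_{21}A_{11}^{-1}A_{12}$, invokes a similarity argument to show the two blocks share their nonzero eigenvalues (each with multiplicity two in the squared problem), and only then substitutes $A_{22}^{-1}A_{21}A_{11}^{-1}A_{12} = I - A_{22}^{-1}S_{22}$ to land on the generalized eigenvalues of $(S_{22},A_{22})$ and $\lambda = 1 \pm \sqrt{1-\widehat{\lambda}}$. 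You instead eliminate $\mathbf{x}$ directly from the coupled eigenvalue equations, exactly as the paper itself does in \Cref{lem:D+} and \Cref{lem:hD-}, arriving at $A_{22}^{-1}A_{21}A_{11}^{-1}A_{12}\mathbf{y} = (\lambda-1)^2\mathbf{y}$ and hence $S_{22}\mathbf{y} = \bigl(1-(\lambda-1)^2\bigr)A_{22}\mathbf{y}$. Your elimination route has the advantage of producing the eigenvectors $[\tfrac{1}{\lambda-1}A_{11}^{-1}A_{12}\mathbf{v}_2;\mathbf{v}_2]$ and both signs of the square root directly, sidestepping the sign ambiguity that the squaring argument glosses over; the paper's squaring route makes the multiplicity-two pairing between the two diagonal blocks of $C^2$ more transparent. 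Your observation that the kernel-based eigenvalues for $\widehat{D}_+$ are all equal to $+1$ matches the paper's own proof (the $-1$ appearing in the statement of \Cref{prop:eig} for this case is not produced by either argument), and the $\widehat{\lambda}=1$ edge case you flag (where $\lambda=1$ and the eigenvector formula degenerates) is likewise left unresolved by the paper, so it is not a gap relative to the paper's own treatment.
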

\begin{proof}
Observe
\begin{align*}
\widehat{D}_+^{-1}A & = \begin{bmatrix} A_{11}^{-1} & \mathbf{0} \\ \mathbf{0} & A_{22}^{-1} \end{bmatrix}
	\begin{bmatrix} A_{11}  & A_{12} \\ A_{21} & A_{22}\end{bmatrix} 
= I + \begin{bmatrix} \mathbf{0} & A_{11}^{-1} A_{12} \\ A_{22}^{-1}A_{21} & \mathbf{0} \end{bmatrix}.\label{eq:22eig0}
\end{align*}
with eigenvalues and eigenvectors determined by the off-diagonal matrix on the right.
If $A_{12}$ or $A_{21}$ are not full rank, then for each $\mathbf{y}_k\in$ ker$(A_{12})$ or
each $\mathbf{x}_k\in$ ker$(A_{21})$, there exist eigenpairs $\left\{ [ \mathbf{0}; \mathbf{y}_k] , 1\right\}$
or $\left\{ [\mathbf{x}_k; \mathbf{0}], 1\right\}$, respectively. For $\mathbf{x}\not\in$ ker$(A_{21})$
and $\mathbf{y}\not\in$ ker$(A_{12})$, we consider eigenvalues of the off-diagonal matrix in \eqref{eq:22eig0}.
Squaring this matrix results in the block-diagonal eigenvalue problem 
\begin{align}
    \begin{bmatrix} A_{11}^{-1} A_{12}A_{22}^{-1}A_{21} & \mathbf{0} \\ \mathbf{0} & A_{22}^{-1}A_{21}A_{11}^{-1}A_{12}\end{bmatrix}
	\begin{bmatrix} \mathbf{x} \\ \mathbf{y} \end{bmatrix} & = 
	\widehat{\lambda} \begin{bmatrix} \mathbf{x} \\ \mathbf{y} \end{bmatrix} \label{eq:eigS}.
\end{align}
We can restrict ourselves to the case of $\mathbf{x}\not\in$ ker$(A_{21})$ and $\mathbf{y}\not\in$ ker$(A_{12})$.
Note by similarity that the spectrum of $A_{22}^{-1}A_{21}A_{11}^{-1}A_{12}$ is equivalent to that of
$A_{21}A_{11}^{-1}A_{12}A_{22}^{-1}$. Now suppose $A_{11}^{-1}A_{12}A_{22}^{-1}A_{21}\mathbf{x} =
\lambda\mathbf{x}$, and let $\mathbf{w} := A_{21}\mathbf{x}$. Then $A_{21}A_{11}^{-1}A_{12}A_{22}^{-1}\mathbf{w} 
= A_{21}(A_{11}^{-1}A_{12}A_{22}^{-1}A_{12}\mathbf{x}) = \lambda A_{21}\mathbf{x} = \lambda\mathbf{w}$.
Thus if $\{\lambda,\mathbf{x}\}$ is a nonzero eigenpair of $A_{11}^{-1} A_{12}A_{22}^{-1}A_{21}$, then
$\{\lambda, A_{22}^{-1}A_{21}\mathbf{x}\}$ is an eigenpair of $A_{22}^{-1}A_{21}A_{11}^{-1}A_{12}$.
To that end, eigenvalues of \eqref{eq:eigS} consist of 0, corresponding to the kernels of $A_{21}$ and $A_{12}$,
and the nonzero eigenvalues of the lower diagonal block (or upper), all with multiplicity two. 

Now note that $A_{22}^{-1}A_{21}A_{11}^{-1}A_{12} = I - A_{22}^{-1}S_{22}$, and let $\widetilde{\lambda}$ be a
generalized eigenvalue of $(S_{22},A_{22})$. Then, $\widehat{\lambda} = 1 - \widetilde{\lambda}$, and
the spectrum of $D_{-}^{-1}A$ can be written as a nonlinear function of the generalized eigenvalues of
$(S_{22}, A_{22})$,
\begin{align*}\label{eq:DhpA}
\sigma(\widehat{D}_+^{-1}A) & = \left(1 \pm \sqrt{1- \widetilde{\lambda}}\right) \quad\medcup\quad \left\{ \pm 1\right\},
\end{align*}
with inclusion of the latter set of eigenvalues depending on the rank of $A$ and $A_{12}$/$A_{21}$,
respectively. 

Note, each vector $\mathbf{v}_k\in$ ker$(A)$ correspond to a $\mathbf{y}_k\in$ ker$(S_{22})$. In the
context of the generalized eigenvalues of $(S_{22},A_{22})$, each $\widetilde{\lambda} = 0$ is one-to-one
with an eigenvalue $\lambda = 0$ of $\widehat{D}_+^{-1}A$, which can be seen by plugging
$\widetilde{\lambda} = 0$ into \eqref{eq:DhpA}.
\end{proof}

\begin{lemma}[Spectrum of $\widehat{D}_{-}^{-1}A$]\label{lem:hD-}
\end{lemma}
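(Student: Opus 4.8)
The plan is to follow the template of \Cref{lem:hD+}, adjusted for the sign flip in the $(2,2)$-block. First I would form
\[
\widehat{D}_-^{-1}A = \begin{bmatrix} A_{11}^{-1} & \mathbf{0} \\ \mathbf{0} & -A_{22}^{-1}\end{bmatrix}\begin{bmatrix} A_{11} & A_{12} \\ A_{21} & A_{22}\end{bmatrix} = \begin{bmatrix} I & A_{11}^{-1}A_{12} \\ -A_{22}^{-1}A_{21} & -I\end{bmatrix},
\]
and write the eigenproblem $\widehat{D}_-^{-1}A\,[\mathbf{x};\mathbf{y}] = \lambda[\mathbf{x};\mathbf{y}]$ as the coupled equations $A_{11}^{-1}A_{12}\mathbf{y} = (\lambda-1)\mathbf{x}$ and $-A_{22}^{-1}A_{21}\mathbf{x} = (\lambda+1)\mathbf{y}$.

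Next I would peel off the rank-deficient eigenpairs exactly as in the other lemmas: every $\mathbf{y}_k\in\ker(A_{12})$ gives the eigenpair $\{[\mathbf{0};\mathbf{y}_k],-1\}$ (take $\mathbf{x}=\mathbf{0}$, then the second equation forces $\lambda=-1$), and every $\mathbf{x}_k\in\ker(A_{21})$ gives $\{[\mathbf{x}_k;\mathbf{0}],1\}$, accounting for $\dim\ker(A_{12})$ eigenvalues $-1$ and $\dim\ker(A_{21})$ eigenvalues $1$. For the remaining eigenvectors I would assume $\lambda\neq\pm1$, solve the first equation for $\mathbf{x} = \tfrac{1}{\lambda-1}A_{11}^{-1}A_{12}\mathbf{y}$, substitute into the second, and obtain $A_{22}^{-1}A_{21}A_{11}^{-1}A_{12}\mathbf{y} = (1-\lambda^2)\mathbf{y}$. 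Invoking the identity $A_{22}^{-1}A_{21}A_{11}^{-1}A_{12} = I - A_{22}^{-1}S_{22}$ used in \Cref{lem:hD+}, this collapses to $A_{22}^{-1}S_{22}\mathbf{y} = \lambda^2\mathbf{y}$, i.e.\ $\lambda^2 = \widehat{\lambda}$ for a generalized eigenvalue of $S_{22}\mathbf{v}_2 = \widehat{\lambda}A_{22}\mathbf{v}_2$, so $\lambda = \pm\sqrt{\widehat{\lambda}}$ with eigenvector $[\tfrac{1}{\lambda-1}A_{11}^{-1}A_{12}\mathbf{v}_2;\mathbf{v}_2]$, two eigenpairs per generalized eigenpair. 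Together with the kernel eigenvalues this yields the claimed $\sigma(\widehat{D}_-^{-1}A) = (\pm\sqrt{\widehat{\lambda}})\medcup\{\pm1\}$; note $\mathbf{v}_2\in\ker(S_{22})$ forces $\widehat{\lambda}=0$ and hence $\lambda=0$, consistent with \Cref{prop:s22}. As an independent check I would note that $(\widehat{D}_-^{-1}A)^2 = \diag(A_{11}^{-1}S_{11},\,A_{22}^{-1}S_{22})$ with $S_{11} := A_{11} - A_{12}A_{22}^{-1}A_{21}$ is block-diagonal, which makes transparent that squaring any eigenvalue of $\widehat{D}_-^{-1}A$ gives a generalized eigenvalue of $(S_{22},A_{22})$ (or, off rank-deficient off-diagonal blocks, the value $1$), matching the two families.

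The algebra is routine; the delicate part, as in \Cref{lem:hD+}, is the multiplicity and overlap bookkeeping. One must check that the $\dim\ker(A_{12})+\dim\ker(A_{21})$ kernel eigenvalues together with the $\pm\sqrt{\widehat{\lambda}}$ family exhaust all $n$ eigenvalues with the right multiplicities, taking care of (i) generalized eigenpairs with $\widehat{\lambda}=1$, which land on $\lambda=\pm1$ and thus fall outside the ``$\lambda\neq\pm1$'' branch and must be attributed to the kernel count, and (ii) the possibility that the generalized eigenvectors do not form a complete basis, in which case the statement should be read as an equality of spectra (sets) rather than of multisets. I would resolve these at the level of rigor of \Cref{lem:hD+}, stating the set equality cleanly and asserting the multiplicities (multiplicity of $1$ equal to $\dim\ker(A_{21})$, of $-1$ equal to $\dim\ker(A_{12})$, and multiplicity two for each generalized eigenpair of $(S_{22},A_{22})$) with the obvious generic-position caveat.
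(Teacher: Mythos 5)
Your proposal is correct and follows essentially the same route as the paper's proof: peel off the $\ker(A_{12})$/$\ker(A_{21})$ eigenpairs at $\lambda=\mp 1$, eliminate $\mathbf{x}$, reduce to an eigenproblem for $A_{22}^{-1}A_{21}A_{11}^{-1}A_{12} = I - A_{22}^{-1}S_{22}$, and conclude $\lambda = \pm\sqrt{\widehat{\lambda}}$ for generalized eigenvalues of $(S_{22},A_{22})$; the only cosmetic difference is that you work with $\lambda$ directly rather than shifting by $-I$ as the paper does, and your added identity $(\widehat{D}_-^{-1}A)^2 = \diag(A_{11}^{-1}S_{11},\,A_{22}^{-1}S_{22})$ is a correct and welcome sanity check, with your multiplicity caveats handled at least as carefully as in the paper.
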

\begin{proof}
Observe
\begin{align}\label{eq:eig0hD-}
\widehat{D}_{-}^{-1}A & = \begin{bmatrix} A_{11}^{-1} & \mathbf{0} \\ \mathbf{0} & -A_{22}^{-1} \end{bmatrix}
	\begin{bmatrix} A_{11}  & A_{12} \\ A_{21} & A_{22}\end{bmatrix} 
= -I + \begin{bmatrix} 2I & A_{11}^{-1} A_{12} \\ -A_{22}^{-1}A_{21} & \mathbf{0} \end{bmatrix},
\end{align}
with eigenvalues and eigenvectors determined by the saddle-point matrix on the right. If $A_{12}$ or $A_{21}$
are not full rank, then for each $\mathbf{y}_k\in$ ker$(A_{12})$ or each $\mathbf{x}_k\in$ ker$(A_{21})$, there
exist eigenpairs $\left\{ [ \mathbf{0}; \mathbf{y}_k] , -1\right\}$ or $\left\{ [\mathbf{x}_k; \mathbf{0}], 1\right\}$,
respectively. For $\mathbf{x}\not\in$ ker$(A_{21})$
and $\mathbf{y}\not\in$ ker$(A_{12})$, analogous to \Cref{lem:D+} we can formulate an eigenvalue
problem for the saddle-point matrix, with eigenvalue $\widehat{\lambda}$ and block eigenvector
$[\mathbf{x}; \mathbf{y}]$.
Solving for $\mathbf{x} = \frac{1}{\widehat{\lambda}-2} A_{11}^{-1}A_{12}\mathbf{y}$ and plugging
into the equation for $\mathbf{y}$ yields an equivalent reduced eigenvalue problem,
\begin{align}\label{eq:eighD-}
A_{22}^{-1}A_{21}A_{11}^{-1}A_{12} \mathbf{y} & = \widehat{\lambda}(2 -  \widehat{\lambda})\mathbf{y}.
\end{align}
Note, this is well-posed in the sense that we only care about $\mathbf{y}\not\in$ ker$(A_{12})$
and $\mathbf{x} = \frac{1}{\widehat{\lambda}-2} A_{11}^{-1}A_{12}\mathbf{y} \not\in$ ker$(A_{21})$. 
Noting that $A_{22}^{-1}A_{21}A_{11}^{-1}A_{12} = I - A_{22}^{-1}S_{22}$, \eqref{eq:eighD-}
is equivalent to the generalized eigenvalue problem
\begin{align}\label{eq:genhD-}
S_{22}\mathbf{y} & = (1 - \widehat{\lambda})^2A_{22}\mathbf{y}.
\end{align}
Letting $\widetilde{\lambda}$ be a generalized eigenvalue of $(S_{22}, A_{22})$ in \eqref{eq:genhD-}
and solving for $\widehat{\lambda}$ yields $\widehat{\lambda} = 1\pm \sqrt{\widetilde{\lambda}}$. Including
the minus identity perturbation in \eqref{eq:eig0hD-} yields the spectrum of $D_{-}^{-1}A$ as a
nonlinear function of the generalized eigenvalues of $(S_{22}, A_{22})$ in \eqref{eq:genhD-},
given by
\begin{align*}
\sigma(\widehat{D}_{-}^{-1}A) = \left(\pm \sqrt{\widetilde{\lambda}}\right) \quad\medcup\quad \left\{ \pm 1\right\},
\end{align*}
Eigenvectors corresponding to eigenvalues $\sqrt{\widetilde{\lambda}}$ are given by 
$[\frac{-1}{1-\lambda}A_{11}^{-1}A_{12}\mathbf{y}; \mathbf{y}]$ for each generalized eigenpair
$\{\mathbf{y}, \widehat{\lambda}\}$ of \eqref{eq:eig3} and eigenvalue 
$\lambda(\widehat{\lambda})$. Note that each generalized eigenpair of $(S_{22},A_{22})$ corresponds
to two eigenpairs of $\widehat{D}_-^{-1}A$, because $\lambda(\widehat{\lambda}) = 0$ is a quadratic equation in
$\lambda$. The latter eigenvalue $\lambda = 1$ in \eqref{eq:DpA} has multiplicity given by the sum of
dimensions of ker$(A_{12})$ and ker$(A_{21}$, with eigenvectors as defined previously in the proof. 
\end{proof}

\end{proof}

\section{Applications in transport: $\mathcal{H}_1\otimes L_2$ VEF}\label{sec:transport}

\subsection{The variable Eddington factor (VEF) method}\label{sec:transport:vef}

The steady-state, mono-energetic, discrete-ordinates, linear Boltzmann equation with isotropic scattering
and source is given by
\begin{equation} \label{eq:transport}
	\begin{aligned}
		\Omegahat_d \cdot \nabla \psi_d(\x) + \sigma_t(\x) \psi_d(\x) &= \frac{\sigma_s(\x)}{4\pi} \sum_{d'=1}^N w_{d'} \psi_{d'}(\x) + \frac{Q(\x)}{4\pi} \,, \quad \x \in \mathcal{D} \,, \\
		\psi_d(\x) &= \overline{\psi}(\x,\Omegahat_d) \,, \quad \x \in \partial \mathcal{D} \ \mathrm{and} \ \Omegahat_d \cdot \nhat < 0 \,. 
	\end{aligned}
\end{equation}
Here, the direction of particle motion, $\Omegahat$, is discretized into $N$ angles corresponding
to a quadrature rule for the unit sphere, $\{\Omegahat_d, w_d\}_{d=1}^N$, for quadrature weights $w_d>0$.
A standard approach to solve \eqref{eq:transport} is to (independently) invert the
left-hand side for each $\psi_d$, update $\sum_{d'=1}^N w_{d'} \psi_{d'}(\x)$,
and repeat until convergence. Unfortunately, for many applications of interest, this process can converge arbitrarily slowly.

An alternative scheme for solving \eqref{eq:transport} is the Variable Eddington factor
(VEF) method, a nonlinear iterative scheme that builds an exact reduced-order model of the transport
equation \cite{goldin,mihalas}. The VEF reduced-order model is given by 
\begin{align}\label{eq:VEF}
\begin{split}
\nabla \cdot \vec{J}(\x) + \sigma_a(\x) \varphi(\x) & = Q(\x) \, \\
\nabla \cdot \left[ \vec{E}(\x) \varphi(\x) \right] + \sigma_t(\x) \vec{J}(\x) & = 0 \,.
\end{split}
\end{align}
It is derived by taking the zeroth and first angular moments of \eqref{eq:transport}, and introducing the Eddington tensor
as a closure:
\begin{equation}\label{eq:E}
		\vec{E}(\x) = \frac{\sum_{d=1}^N \Omegahat_d \otimes \Omegahat_d \, \psi_d(\x) w_d}{\sum_{d=1}^N \psi_d(\x) w_d} \,. 
\end{equation}
The algorithm is then to replace the scattering sum $\sum_{d'=1}^N w_{d'} \psi_{d'}(\x)$ in \eqref{eq:transport}
with the solution of the VEF equations, $\varphi(\x)$, invert \eqref{eq:transport} for each angle, update the 
Eddington tensor, and solve the VEF equations.

Here we consider an $\mathcal{H}^1(\mathcal{D})\otimes L^2(\mathcal{D})$ mixed finite
element discretization of \eqref{eq:VEF}, as in \cite{vef_paper,vef_proceedings}. This yields the $2\times 2$
block system
\begin{equation} \label{eq:vef_system}
		\begin{bmatrix} M_t & -G \\ B & M_a \end{bmatrix} \begin{bmatrix} \vec{J}\\ \vec{\varphi} \end{bmatrix} = \begin{bmatrix} \vec{g} \\ \vec{f} \end{bmatrix} \,,
\end{equation}
where $M_t$ is an $\mathcal{H}^1(\mathcal{D})$ mass matrix, $M_a$ a
(block diagonal) $L^2(\mathcal{D})$ mass matrix, $B$ the
$L^2(\mathcal{D})\otimes \mathcal{H}^1(\mathcal{D})$ divergence matrix, and $G$ the
$\mathcal{H}^1(\mathcal{D}) \otimes L^2(\mathcal{D})$ Eddington gradient matrix, defined as
$\vec{v}^T G \vec{\varphi} = \int_\mathcal{D} \nabla \vec{v} : \vec{E} \varphi \, d\x$.
Note that for $\sigma_a(x) = \sigma_t(x) - \sigma_s(x) = 0$, $M_a = \mathbf{0}$ and \eqref{eq:vef_system}
simplifies to a saddle-point system with zero (2,2)-block. Due to the Eddington tensor,
$B \neq -G^T$ and, thus, at every VEF iteration a non-symmetric $2\times 2$ block operator of the
form in \eqref{eq:block} must be solved.

\subsection{Solution of $\mathcal{H}_1\otimes L_2$ VEF discretization}

There are a number of advantages to VEF over other methods of solving \eqref{eq:transport} \cite{vef_paper,
vef_proceedings}, but the system in \eqref{eq:vef_system} remains difficult to solve.
However, because the diagonal blocks of \eqref{eq:vef_system}
are mass matrices and relatively easy to invert, direct application or preconditioning of the Schur complement,
$S_{22} := M_a + B M_t^{-1} G$, is feasible, which corresponds to a drift-diffusion-like equation.
The remaining challenge is that $M_t$ is an
$\mathcal{H}^1(\mathcal{D})$ mass matrix and, although the action of $M_t^{-1}$ can be
computed rapidly using Krylov methods, a closed form to construct $S_{22} := M_a + B M_t^{-1} G$ is
typically not available. A common solution is to approximate $M_t$ using quadrature lumping, resulting in
a diagonal approximation, $\widetilde{M}_t$. Thus, consider the four preconditioners,
\begin{align}\label{eq:vef_prec}
\begin{split}
D & = \begin{bmatrix} M_t & \mathbf{0} \\ \mathbf{0} & M_a + BM_t^{-1} G \end{bmatrix} , \hspace{5.75ex}
	L = \begin{bmatrix} M_t & \mathbf{0} \\ B & M_a + B M_t^{-1} G \end{bmatrix} , \\
\widetilde{D}&  = \begin{bmatrix} M_t & \mathbf{0} \\ \mathbf{0} & M_a + B \widetilde{M_t}^{-1} G \end{bmatrix} \,,\hspace{5ex}
		\widetilde{L} = \begin{bmatrix} M_t & \mathbf{0} \\ B & M_a + B \widetilde{M_t}^{-1} G \end{bmatrix} .
\end{split}
\end{align}

Consider a 1d discretization of \eqref{eq:VEF}, where $\mathrm{J}$
is discretized with quadratic $\mathcal{H}^1(\mathcal{D})$ and $\varphi$ with linear $L^2(\mathcal{D})$,
with $\sigma_t(x) = Q(x) = 1$ and absorption $\sigma_a\in\{0,0.9\}$ to demonstrate the effect of the
$A_{22}$ block ($A_{22} = \mathbf{0}$ when $\sigma_a = 0$).
In the limit of $\sigma_t \gg 1$, $E \to \frac{1}{3}I$, wherein scaling the second row of \eqref{eq:vef_system}
by $-1/3$ then results in a symmetric operator. Although this is unlikely to occur on the whole domain in practice,
we perform tests on this case as well to demonstrate that results do not depend on nonsymmetry in \eqref{eq:vef_system}. 
Table \ref{tab:data} shows the number of GMRES iterations to converge to $10^{-10}$ relative residual tolerance
using block preconditioners in \eqref{eq:vef_prec}. 
{\begin{table}[!htb]
\small
\centering

\begin{tabular}{||c | c | c || cc | cc ||}\Xhline{1.25pt}
\multicolumn{3}{||c||}{} & \multicolumn{2}{c|}{Unlumped} & \multicolumn{2}{c||}{Lumped} \\ \hline
& $\sigma_a$ & $N$ & Diagonal & Triangular & Diagonal & Triangular \\ \hline\hline
	\parbox[t]{2mm}{\multirow{4}{*}{\rotatebox[origin=c]{90}{Nonsymm.}}}
& 0 & 401 & 2 & 2 & 44 & 23 \\
& 0 & 4001 & 4 & 4 & 53 & 25 \\\cline{2-7}
& 0.9 & 401 & 12 & 2 & 57 & 27\\
& 0.9 & 4001 & 12 & 4 & 77 & 32 \\\hline\hline
	\parbox[t]{2mm}{\multirow{4}{*}{\rotatebox[origin=c]{90}{Symm.}}}
& 0 & 401 & 2 & 2 & 44 & 23 \\
& 0 & 4001 & 4 & 4 & 55 & 25 \\ \cline{2-7}
& 0.9 & 401 & 12 & 2 & 57 & 27 \\
& 0.9 & 4001 & 12 & 4 & 76 & 26 \\
\hline\hline
\Xhline{1.25pt}
\end{tabular} 
\caption{ }
\label{tab:data}
\end{table}
}
The unlumped block-diagonal and triangular preconditioners converge in two or four iterations when the
$A_{22}$ block is zero, largely consistent with theory in \cite{Murphy:2000hja,Ipsen:2001ui}.
However, when 
$M_a = A_{22} \neq \mathbf{0}$, block-diagonal preconditioning requires 12 iterations. Although this is
better than the several hundred seen in \Cref{tab:ex}, it is still $3-6\times$ slower than block lower
triangular. Moreover, the example problem used here is relatively simple and one-dimensional -- initial
experiments on harder problems have required a larger number of iterations. We can also see how
sensitive Schur-complement preconditioning can be to approximations, where the lumped preconditioners
increase iteration counts by $10\times$ or more. In practice, lumping $M_t$ is likely necessary
because we cannot directly form $M_a + BM_t^{-1}G$, which makes construction of effective
preconditioners difficult. Considering this problem in the context of Schur-complement preconditioning
has suggested reformulating \eqref{eq:vef_system} using a lumped $M_t$ (which we can solve in $\approx 2$
iterations), block-triangular preconditioners, and handling the difference between a lumped and non-lumped
$M_t$ in the larger nonlinear VEF iteration. This is ongoing work, and an interesting application of
Schur-complement preconditioning.

\section{Conclusions}\label{sec:conc}

The simple lesson from \Cref{sec:transport} is that implementing a block-triangular (or block-LDU if symmetry
is important) preconditioner may provide a significant speedup over block-diagonal preconditioners
($\ll1/2$ the iteration count), and should be considered in practice for $A_{22}\neq\mathbf{0}$ and Schur-complement
approximation $\widehat{S}_{22}\neq A_{22}$. The larger point of this paper
is that which type of block preconditioner to use is largely problem specific, and should be considered
on a case-by-case basis. When $A_{22} = \mathbf{0}$, it is generally known that block-diagonal preconditioners
will require roughly twice as many minimal-residual iterations to converge as block-triangular or block-LDU \cite{Fischer:1998vj},
and it is straightforward to estimate the associated computational costs for each preconditioner and pick the most efficient
choice. For $A_{22}\neq\mathbf{0}$, examples in \Cref{sec:diag:disc} demonstrate that block-diagonal and
block-triangular preconditioning can converge in a similar or very different number of iterations. Another
recently developed $2\times 2$ block preconditioner for transport problems found that the block-triangular
variation rarely showed any reduction in iteration count over block-diagonal, while computing the action of
the off-diagonal blocks made the triangular variation several times more expensive \cite{19hetdsa}. In that case,
implementing  the triangular variation is also non-trivial, and the block-diagonal preconditioner is clearly
superior in terms of performance and ease of implementation. 

\bibliographystyle{siamplain}
\bibliography{refs}
\end{document}